\newlength{\tabwidth}
\newlength{\tabheight}
\newlength{\tabrule}
\newlength{\tabwidthx}
\newlength{\tabheightx}
\def\gentabbox#1#2#3#4{\vbox to \tabheight{\setlength{\tabrule}{#3}%
  \setlength{\tabwidthx}{#1\tabwidth}\addtolength{\tabwidthx}{\tabrule}%

\setlength{\tabheightx}{#2\tabheight}\addtolength{\tabheightx}{-\tabheight}%
  \hbox to #1\tabwidth{%
    \hspace{-0.5\tabrule}\rule{\tabrule}{#2\tabheight}\hspace{-\tabrule}%
    \vbox to #2\tabheight{\hsize=\tabwidthx%
      \vspace{-0.5\tabrule}\hrule width\tabwidthx height\tabrule%
      \vspace{-0.5\tabrule}\vfil%
      \hbox to \tabwidthx{\hss#4\hss}%
        \vfil\vspace{-0.5\tabrule}%
      \hrule width\tabwidthx height\tabrule\vspace{-0.5\tabrule}}%
    \hspace{-\tabrule}\rule{\tabrule}{#2\tabheight}\hspace{-0.5\tabrule}}%
  \vspace{-\tabheightx}}}
\def\genblankbox#1#2{\vbox to \tabheight{\vfil\hbox to
#1\tabwidth{\hfil}}}
\def\tabbox#1#2#3{\gentabbox{#1}{#2}{0.4pt}{\strut #3}}
\newcommand{\field}{\mathbb}
\newcommand{\C}{{\field C}}
\newcommand{\R}{{\field R}}
\newcommand{\lam}{\lambda}
\newcommand{\vareps}{\varepsilon}
\newcommand{\AV}{\mathrm{AV}}
\renewcommand{\big}{{\dagger}}
\newcommand{\Ind}{\mathrm{Ind}}
\newcommand{\AC}{\caA\caV}
\newcommand{\ol}{\overline}
\newcommand{\lra}{\longrightarrow}
\newcommand{\Aql}{A_{\frq}(\lam)}
\newcommand{\Sp}{\mathrm{Sp}}
\newcommand{\SO}{\mathrm{SO}}
\newcommand{\U}{\mathrm{U}}
\newcommand{\Unip}{\mathrm{Unip}}
\newcommand{\GL}{\mathrm{GL}}
\newtheorem{prop}{Proposition}[subsection]
\newtheorem{lemma}[prop]{Lemma}
\newtheorem{theorem}[prop]{Theorem}
\newtheorem{conj}[prop]{Conjecture}
\newtheorem{conjecture}[prop]{Conjecture}
\theoremstyle{definition}
\newtheorem{remark}[prop]{Remark}
\newtheorem{example}[prop]{Example}
\newtheorem{definition}[prop]{Definition}
\newcommand{\fra}{\mathfrak{a}}
\newcommand{\frg}{\mathfrak{g}}
\newcommand{\frh}{\mathfrak{h}}
\newcommand{\frk}{\mathfrak{k}}
\newcommand{\frl}{\mathfrak{l}}
\newcommand{\frn}{\mathfrak{n}}
\newcommand{\fro}{\mathfrak{o}}
\newcommand{\frp}{\mathfrak{p}}
\newcommand{\frq}{\mathfrak{q}}
\newcommand{\frs}{\mathfrak{s}}
\newcommand{\frt}{\mathfrak{t}}
\newcommand{\fru}{\mathfrak{u}}
\newcommand{\bbC}{\mathbb{C}}
\newcommand{\bbH}{\mathbb{H}}
\newcommand{\bbR}{\mathbb{R}}
\newcommand{\caA}{\mathcal{A}} 
\newcommand{\caC}{\mathcal{C}}
\newcommand{\caN}{\mathcal{N}}
\newcommand{\caO}{\mathcal{O}}
\newcommand{\caV}{\mathcal{V}}
\begin{document}

\title[unipotent representations]{unitarity of unipotent representations of $\Sp(p,q)$ and $\SO^*(2n)$}

\author{Dan M. Barbasch, Peter E. Trapa}

\address{Department of Mathematics, Cornell University, Ithaca, NY 14853}
\email{dmb14@cornell.edu}

\address{Department of Mathematics, University of Utah, Salt Lake City, 
UT 84112}
\email{ptrapa@math.utah.edu}

\thanks{DB was partially supported by NSA grant H98230-16-1-0006.}

\thanks{PT was partially supported by NSF DMS-1302237.}

\begin{abstract}
The purpose of this paper is to define a set of  representations of $\Sp(p,q)$ and $\SO^*(2n)$, the unipotent 
representations of the title, and establish their unitarity.  The unipotent representations considered
here properly contain the special unipotent representations of Arthur and Barbasch-Vogan; in particular
we settle the unitarity of  special unipotent representations for these groups.  
\end{abstract}

\maketitle

\section{introduction}
\label{s:intro}

A long-standing heuristic in the unitary representation theory of a real reductive group $G_\R$
is that the smallest representations with a fixed interesting infinitesimal character
should be unitary.  When the notions of ``interesting'' and ``smallest''
can be made precise,
the corresponding representations are often
fundamental building blocks of the entire unitary dual, and are called 
unipotent.  Because of the algebraic
nature of this kind of approach, it is typically challenging to prove that unipotent
representations defined in this way are indeed unitary.

The purpose of this paper is to prove that certain unipotent representations of the
non quasi-split groups $\Sp(p,q)$ and $\SO^*(2n)$ are unitary.  The
 representations we treat should suffice as building blocks
 for the integral unitary dual (cf.~Conjecture \ref{c:intro} below).

In particular, one class of unipotent representations
that we treat, the special unipotent
representations, originate in conjectures of Arthur and are predicted
to be local component of automorphic forms (which of course would give
an explanation of their unitarity).
In the case of of a real group $G_\R$, 
the ideas of Arthur were made precise and refined by
Barbasch-Vogan \cite{bv2} and most completely in \cite[Chapter 26]{abv}. 
Our first main result establishes
  that the special unipotent representations of 
  $\Sp(p,q)$ and $\SO^*(2n)$ are unitary.

We begin by recalling the definition of special unipotent
representations in general.  Suppose $G_\R$ is the real points of a
connected reductive complex algebraic group $G$.  
Write $G^\vee$ for the Langlands
dual of $G$, and write $\frg$ and $\frg^\vee$
for the respective Lie algebras.  The construction of the dual group
specifies Cartan subalgebras $\frh$ and $\frh^\vee$ (of $\frg$ and 
$\frg^\vee$), an isomorphism between them,
\begin{equation}
\label{e:hisom}
\frh^\vee \stackrel{\sim}{\rightarrow} \frh^*,
\end{equation}
and an isomorphism between $W = W(\frg,\frh)$ and $W^\vee = W(\frg^\vee, \frh^\vee)$.

Fix a nilpotent adjoint orbit 
\begin{equation}
\label{e:O}
\caO^\vee \subset \frg^\vee.  
\end{equation}
Using the
Jacobson-Morozov Theorem,
choose
an $\frs\frl_2$ triple $\{e^\vee, f^\vee, h^\vee\}$ with $h^\vee \in \frh^\vee$
and $e^\vee \in \caO^\vee$.  Although $h^\vee$ depends on this choice,
its $W^\vee$ orbit does not.  Using \eqref{e:hisom}, we obtain a well-defined element
\begin{equation}
\label{e:lam}
\chi(\caO^\vee) = \frac12 h^\vee \in \frh^\vee/W^\vee \simeq \frh^*/W.
\end{equation}
According to the Harish-Chandra isomorphism, $\chi(\caO^\vee)$ specifies
a maximal ideal 
\begin{equation}
\label{e:z}
Z(\caO^\vee) \triangleleft Z(\frg),
\end{equation}
in the center $Z(\frg)$ of the enveloping algebra $U(\frg)$ of $\frg$.
It is easy to see that there is a unique primitive ideal 
\begin{equation}
\label{e:j}
I(\caO^\vee) \triangleleft U(\frg)
\end{equation}
containing $Z(\caO^\vee)$ which is maximal in the inclusion order
on primitive ideals.   Representations annihilated by the maximal $I(\caO^\vee)$ are 
therefore small and have the interesting infinitesimal
character $\chi(\caO^\vee)$.

\begin{definition}
\label{d:su}
Let $X$ be an irreducible $(\frg,K)$ module. 
The $X$ is said to be special unipotent if there
exists $\caO^\vee \subset \frg^\vee$ as above such that 
\[
\mathrm{Ann}_{U(\frg)}(X) =I(\caO^\vee).
\]
\end{definition}

\begin{conjecture}[Arthur, Barbasch-Vogan]
\label{c:bv}
Suppose $X$ is a special unipotent $(\frg,K)$ module for $G_\R$.  Then
$X$ is unitarizable.
\end{conjecture}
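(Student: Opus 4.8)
\smallskip

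\noindent\emph{Outline of the argument for $G_\R = \Sp(p,q)$ and $\SO^*(2n)$.}
The plan is to prove the stronger statement that \emph{all} of the unipotent representations defined in this paper --- a class strictly larger than the special unipotent representations of Definition~\ref{d:su} --- are unitary, from which Conjecture~\ref{c:bv} follows since, by construction, the special unipotent representations form a subclass. The point of enlarging the class is that the larger family is stable under cohomological induction in the weakly fair range, which is what makes an inductive argument possible. To each nilpotent orbit $\caO^\vee \subset \frg^\vee$ --- with $\frg^\vee = \frs\fro(2n{+}1,\C)$ when $G_\R = \Sp(p,q)$ and $\frg^\vee = \frs\fro(2n,\C)$ when $G_\R = \SO^*(2n)$ --- one attaches, via Barbasch--Vogan duality, a nilpotent orbit $\caO = d(\caO^\vee) \subset \frg$; the ``large'' members of the packet attached to $\caO^\vee$ have associated variety $\overline{\caO}$. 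The first step is a combinatorial analysis of the partition labelling $\caO^\vee$ showing that, after peeling off a ``rigid'' core, every unipotent representation under consideration is of the form $\Aql$, cohomologically induced in the weakly fair range from a unipotent representation $X_0$ of a Levi subgroup $L_\R$ that is again a product of factors of the same types --- $\Sp(p',q')$, $\SO^*(2m)$, and $\GL$- or $\U$-type factors.

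Granting such a reduction, the second step invokes Vogan's theorem that cohomological induction in the weakly fair range carries unitary modules to unitary modules or to zero; applied to the unitary $X_0$ this yields unitarity of $\Aql$. One must separately check that the infinitesimal character and the primitive ideal transport correctly under $\Aq$ --- a matter of tracking $\rho$-shifts and of the behaviour of associated varieties under cohomological induction --- so that the module produced is the expected special unipotent representation with $\Ann_{U(\frg)}(\Aql) = I(\caO^\vee)$. The unipotent representations arising on the $\GL$- and $\U$-type Levi factors are the familiar Speh-type and complementary-series (Stein) modules, whose unitarity is classical.

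The third step is the base of the induction: the ``rigid'' unipotent representations, those not cohomologically induced in the weakly fair range from a proper Levi. For $\Sp(p,q)$ and $\SO^*(2n)$ these attach to a short, explicit list of nilpotent orbits, and the plan is to realize each such $X_0$ as a theta lift $\theta(\sigma)$ of a unitary --- typically one-dimensional, or itself rigid unipotent --- representation $\sigma$ of the companion member $H_\R$ of a reductive dual pair, using the quaternionic pairs $(\Sp(p,q),\O^*(2k))$ and $(\SO^*(2n),\Sp(r,s))$. Unitarity of $\theta(\sigma)$ then follows from the preservation of unitarity under theta lifting (J.-S.~Li's stable-range theorem, supplemented by the finer analysis outside the stable range), while the identification of $\theta(\sigma)$ with the desired representation rests on the known formulas for infinitesimal characters and for the moment-map behaviour of associated varieties under the oscillator correspondence. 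For any rigid case not visibly reached this way, the fallback is a direct computation of the signature of the long intertwining operator on a suitable Langlands-type induced module, deforming the continuous parameter down from a point of known unitarity and controlling the sign changes across the reducibility hyperplanes. The remaining, smaller members of each packet are then obtained either as further theta lifts of unitary data or again as $\Aql$ modules in a weakly fair (possibly non-good) range, where Vogan's unitarity conclusion still applies.

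The main obstacle I anticipate lies at the interface of the second and third steps: controlling the theta lifts at the base of the induction precisely enough. One needs (i) the dual-pair data to be chosen so that the operative range of theta lifting is one in which unitarity is known to be preserved \emph{and} in which the lift is irreducible and nonzero --- this is delicate outside the stable range and forces a careful choice of $H_\R$ and of the normalization (additive character, splitting) of the Weil representation; and (ii) the exact equality $\Ann_{U(\frg)}(\theta(\sigma)) = I(\caO^\vee)$, which requires pinning down the associated variety of the lift and matching it to the Barbasch--Vogan orbit $d(\caO^\vee)$, reinforced by a Gelfand--Kirillov dimension count to rule out the lift being too small. A secondary difficulty is the bookkeeping: verifying, partition by partition, that the inductive reduction really does land in the stated list of Levis and rigid orbits, with no exceptional cases --- for instance the very even orbits in type $D$ relevant to $\SO^*(2n)$ --- slipping through.
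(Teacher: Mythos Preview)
Your strategy differs from the paper's in a fundamental way: the paper does \emph{not} use the theta correspondence, and its inductive reduction runs in the opposite direction from yours. For $\Sp(p,q)$, the paper first shows that if every even part of the signed tableau for $\caO_K$ has multiplicity at most two, then $\pi'(\caO_K)$ is a weakly fair $\Aql$ module induced from a one-dimensional character (Proposition~\ref{p:sppq-unip-aql}), hence unitary by Vogan. For general $\caO_K$, instead of descending via cohomological induction to a rigid core on a smaller Levi, the paper uses \emph{real} parabolic induction from $\Sp(p,q)\times\GL(k,\bbH)$ to carry $\pi'(\caO_K)$ \emph{up} to a unipotent representation $\pi'(\caO_K^\dagger)$ of the \emph{larger} group $\Sp(p{+}k,q{+}k)$. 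The key point (Lemma~\ref{l:sppq-main}) is a complementary-series deformation $I(t)$, $0\le t\le 1/2$, irreducible throughout, with $I(0)$ unitarily induced from $\pi'(\caO_K)$ and $I(1/2)=\pi'(\caO_K^\dagger)$; thus the induction preserves both unitarity \emph{and} nonunitarity. Iterating reduces the multiplicities of even parts until one lands in the $\Aql$ case. The nonintegral special unipotents (noneven $\caO^\vee$) are then handled in one stroke: they are irreducible unitary real-parabolic inductions from integral special unipotents tensored with trivial representations of $\GL(k_i,\bbH)$ factors (Section~\ref{s:intro1-proof}).

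Your plan is closer in spirit to \cite{msz}, which the paper cites as an independent proof in the integral special unipotent case only. What the paper's route buys is self-containment: there is no rigid base case beyond one-dimensional characters, and no theta lifting or intertwining-operator signature computation is needed. The gap in your proposal is precisely the one you anticipate at the base: you would need to verify that every rigid unipotent is a theta lift in a range where unitarity and irreducibility are both known, and the paper explicitly remarks that the representations outside the integral case ``do not appear to be accessible by the techniques of the theta correspondence.'' Your stated fallback --- a direct signature analysis --- is not worked out and is exactly what the paper's upward-induction trick is designed to avoid.
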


Our first main result is: 

\begin{theorem}
\label{t:intro1}
Conjecture \ref{c:bv} holds for $G_\R = \Sp(p,q)$ and $\SO^*(2n)$.
\end{theorem}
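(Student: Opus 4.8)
The plan is to prove unitarity by explicit construction: realize each special unipotent representation of $\Sp(p,q)$ or $\SO^*(2n)$ as a subquotient of a unitarily induced standard module, or — when that fails — as a limit (deformation endpoint) of a continuous family of such modules, and then control unitarity along the family. Concretely, the first step is combinatorial: classify the nilpotent orbits $\caO^\vee \subset \frg^\vee$ that actually arise (for $G = \Sp(2n,\C)$ one has $\frg^\vee = \frs\fro(2n+1,\C)$; for $G=\SO^*(2n)$ one has $\frg^\vee = \frs\fro(2n,\C)$), compute the infinitesimal characters $\chi(\caO^\vee) = \tfrac12 h^\vee$ as half-integral or integral strings, and identify the maximal primitive ideal $I(\caO^\vee)$ via its associated variety — the dual special orbit $\caO = d(\caO^\vee) \subset \frg$. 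The representations annihilated by $I(\caO^\vee)$ are then exactly those with that maximal ideal and the correct infinitesimal character; in classical-group terms these are described by Barbasch's work on unitarity and by Moeglin-type parametrizations, so this step reduces to bookkeeping with partitions and signed Young tableaux, together with the real-form information recorded in $\theta$-stable data.

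The second and central step is the construction. For $\Sp(p,q)$ and $\SO^*(2n)$ the relevant $(\frg,K)$-modules should be built by cohomological induction $\Aq(\lambda)$ from $\theta$-stable parabolic subalgebras $\frq = \frl \oplus \fru$, possibly followed by a degenerate (unitary) parabolic induction, exactly in the style that produces ladder and Stein-type representations in types $C$ and $D$. For a discrete set of parameters where the inducing data is in the good or weakly fair range, the Vogan–Zuckerman / Wallach unitarity results give unitarity directly; for the remaining special unipotent parameters one lands outside the weakly fair range and must argue separately. The idea is then to embed the target module into a holomorphically varying family $\{X_t\}$ of Hermitian modules (a complementary series or a family of $\Aq(\lambda)$'s with $\lambda$ moving along a line), show the signature of the invariant form is constant on an interval reaching down to a point where unitarity is classically known (a tempered or unitarily induced endpoint), and check that no reducibility/signature change occurs in between by tracking $K$-types — most efficiently the lowest $K$-type or a small, easily computed set of $K$-types — and by controlling the relevant intertwining operators / Hermitian pairings. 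One then verifies the limit at the special unipotent point is nonzero, irreducible (or identifies its irreducible unitary piece), and has annihilator $I(\caO^\vee)$, using the associated-variety computation from step one.

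I expect the main obstacle to be precisely this deformation-and-signature control at the non-weakly-fair (genuinely unipotent) parameters: away from the fair range the cohomologically induced modules can be reducible or vanish, Hermitian forms can become indefinite, and one loses the automatic positivity of Vogan–Zuckerman. Handling this likely requires a careful case analysis for the two families, a computation of the Jantzen-type filtration or of the signature character on enough $K$-types to pin down positivity, and — in the borderline cases — a comparison with the complex group or with $\Sp(2n,\R)$/$\U(p,q)$ via theta correspondence or Mackey-type restriction, since $\Sp(p,q)$ and $\SO^*(2n)$ sit in dual pairs whose other member is better understood. A secondary technical point is matching the representation-theoretic parametrization with Arthur's: one must show the module constructed is genuinely special unipotent in the sense of Definition \ref{d:su}, i.e.\ that its annihilator equals the maximal $I(\caO^\vee)$ and not merely contains $Z(\caO^\vee)$; this is an associated-variety plus $\tau$-invariant argument, routine in principle but needing the classification of step one to be complete and correct.
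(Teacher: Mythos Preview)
Your proposal has the right ingredients --- cohomological induction in the weakly fair range for a base class, a complementary-series deformation for the rest, and $K$-type/associated-variety bookkeeping to verify the annihilator --- but the architecture you sketch differs from the paper's in a way that matters, and the step you flag as the ``main obstacle'' is exactly the step the paper manages to avoid entirely.

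You propose that for the parameters not covered by weakly fair $\Aql$ one should deform $\lambda$ itself (or run a complementary series) toward a tempered or unitarily-induced endpoint and control the signature along the way, possibly going outside the weakly fair range. The paper never does this. Instead it observes that whenever $\pi'(\caO_K)$ is \emph{not} a weakly fair $\Aql$ (concretely, when some even part of the signed tableau for $\Sp(p,q)$, or odd part for $\SO^*(2n)$, has multiplicity $>2$), one can \emph{real-parabolically induce} it to a larger group $\Sp(p+k,q+k)$ via a Levi $\Sp(p,q)\times\GL(k,\bbH)$: the family $I(t)=\Ind(\pi'(\caO_K)\boxtimes\det^t)$ is irreducible for $0\le t\le 1/2$, so unitarity of $\pi'(\caO_K)$ is \emph{equivalent} to unitarity of $I(1/2)$, and $I(1/2)$ is identified with $\pi'(\caO_K^\dagger)$ for an orbit $\caO_K^\dagger$ whose tableau has the offending multiplicity reduced. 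Iterating lands on an orbit that \emph{is} a weakly fair $\Aql$, and one reads unitarity backward down the chain. The deformation is in the $\GL(k,\bbH)$ character, not in $\lambda$; the endpoint is another unipotent representation of a bigger group, not a tempered representation of the same group; and no signature analysis beyond irreducibility (checked by a single multiplicity-one $K$-type) is required. The nonintegral special unipotent case then reduces to the integral one by a further irreducible unitary induction from $\GL(k_i,\bbH)$ factors carrying trivial representations.

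The gap in your plan is therefore not a wrong idea but a missing one: absent the ``induce up to a larger group until you are weakly fair'' trick, you are left proposing to control Hermitian forms outside the weakly fair range directly, which is hard and is precisely what the paper's inductive scheme circumvents. Your mention of theta correspondence as a fallback is also something the paper explicitly notes does not reach the noneven $\caO^\vee$ cases.
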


\medskip

For $\Sp(p,q)$ and $\SO^*(2n)$, it turns out that there is essentially only one other kind of interesting infinitesimal character.  These infinitesimal 
characters
are attached to nilpotent orbits
in $\frg$ (not $\frg^\vee)$: given such an orbit $\caO$, we let $\chi'(\caO)$ denote the corresponding infinitesimal character.  The definition is somewhat  ad hoc, and is given in Sections \ref{s:sppq-ic} and \ref{s:sostar-ic}; it is also given in \cite{msz}
(under some mild restrictions).  Unlike $\chi(\caO^\vee)$,
$\chi'(\caO)$ is always integral.  We let $\Unip'(\caO)$ denote the set of irreducible representations with infinitesimal character
$\chi'(\caO)$ annihilated by the maximal primitive ideal $I'(\caO)$ containing the maximal ideal $Z'(\caO)$ in $Z(\frg)$ corresponding
to $\chi'(\caO)$; see Section \ref{s:sppq-counting}
and \ref{s:sostar-counting}.

Our second main result is:

\begin{theorem}
\label{t:intro2}
Every representation in $\Unip'(\caO)$ for  $G_\R = \Sp(p,q)$ and $\SO^*(2n)$ is unitary.
\end{theorem}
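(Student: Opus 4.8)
The plan is to establish Theorem \ref{t:intro2} by induction on the rank of $G_\R$, organized around two construction mechanisms: cohomological induction from $\theta$-stable parabolics, and the theta correspondence for the quaternionic dual pairs $(\Sp(p,q), \O^*(2m))$. The first step is to make the set $\Unip'(\caO)$ entirely explicit. Using the Langlands classification together with the descriptions of $\chi'(\caO)$ and $I'(\caO)$ from Sections \ref{s:sppq-ic}--\ref{s:sostar-ic} (and \cite{msz}), one parametrizes the members of $\Unip'(\caO)$ by concrete combinatorial data and, crucially, computes the exact cardinality $\#\,\Unip'(\caO)$ via the counting in Sections \ref{s:sppq-counting}--\ref{s:sostar-counting}. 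In parallel one identifies the associated variety of each $X \in \Unip'(\caO)$; it should be the closure of a single nilpotent $K_\C$-orbit lying over $\caO$, and it is this smallness that makes the induction terminate by forcing the ``rigid'' base cases to be genuinely small representations.

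For the inductive step one shows that, unless $\caO$ is rigid, every $X \in \Unip'(\caO)$ is a cohomologically induced module $A_\frq(Z)$, where $\frq = \frl \oplus \fru$ is a $\theta$-stable parabolic whose Levi $L$ is a product of strictly smaller groups of the same family together with general linear and unitary factors, and where $Z$ is a unitary character (or, inductively, a unipotent representation) of $L$ lying in the weakly fair range. For the $\GL$ and $\U$ factors the relevant modules are known to be unitary (they are themselves cohomologically induced or one-dimensional); for the $\Sp$ and $\SO^*$ factors one invokes the inductive hypothesis. One then concludes that $X$ is unitary by combining Vogan's theorem on the unitarity of $A_\frq$ in the good range with the signature control available here in the weakly fair range --- equivalently, by exhibiting the same module as a theta lift in a range where unitarity is manifestly preserved. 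This step also requires verifying that $A_\frq(Z)$ genuinely has infinitesimal character $\chi'(\caO)$ and annihilator $I'(\caO)$, and that letting $\frq$ and $Z$ range over the admissible data produces exactly $\#\,\Unip'(\caO)$ distinct representations; this is a bookkeeping problem involving $\theta$-stable data, associated varieties, and lowest $K$-types.

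The remaining case is that of rigid $\caO$, where no proper $\frq$ is available and the construction must come from the theta correspondence. Here one realizes each $X \in \Unip'(\caO)$ as a theta lift $\theta(\sigma)$ from a unipotent representation $\sigma$ of the smaller partner in a pair $(\Sp(p,q), \O^*(2m))$, attached to a correspondingly rigid nilpotent orbit, where $\sigma$ is unitary either by the inductive hypothesis or --- at the bottom of the induction --- because $\sigma$ is trivial or one-dimensional. Unitarity of $\theta(\sigma)$ then follows in the stable range from J.-S.~Li's theorem, and in the remaining ranges from an explicit see-saw (or doubling) computation showing the invariant Hermitian form on $\theta(\sigma)$ is a positive multiple of that on $\sigma$. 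One must also check, using the orbit-correspondence results for these pairs (Przebinda, Nishiyama--Ochiai--Zhu), that $\theta(\sigma)$ has infinitesimal character $\chi'(\caO)$ and the expected associated variety, hence indeed lies in $\Unip'(\caO)$, and that the lifts exhaust the remaining count.

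I expect the main obstacle to lie in the rigid base cases: proving positivity of the invariant Hermitian form on the genuinely small unipotent representations, where cohomological induction is unavailable and one is frequently pushed outside the stable range of the theta correspondence, so that a soft preservation-of-unitarity argument does not suffice and one must analyze intertwining operators (or the theta integrals) directly. This is entangled with a second difficulty of a bookkeeping nature: making the combinatorial count $\#\,\Unip'(\caO)$ match precisely the number of representations produced by the two constructions, so that every member of $\Unip'(\caO)$ is accounted for and no spurious representations appear. Tracking associated varieties and annihilators through cohomological induction and through theta --- to be certain the constructed modules land exactly in $\Unip'(\caO)$ --- is the common technical linchpin.
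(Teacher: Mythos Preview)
Your proposal takes a genuinely different route from the paper. You organize the argument as a \emph{descent}: exhibit each $\pi'(\caO_K)$ either as cohomologically induced from a strictly smaller Levi (when $\caO$ is not rigid) or as a theta lift from a smaller dual partner (when $\caO$ is rigid), and conclude unitarity by induction on rank. The paper instead goes \emph{upward}. Only a proper subclass of the $\pi'(\caO_K)$ --- those whose signed tableau has every even part (for $\Sp(p,q)$; odd part for $\SO^*$) of multiplicity at most two --- are shown directly to be weakly fair $A_\frq(\lambda)$ modules (Proposition~\ref{p:sppq-unip-aql}). For a general $\pi'(\caO_K)$ the paper forms the real parabolically induced family
\[
I(t)=\Ind_{(\Sp(p,q)\times\GL(k,\bbH))N_\R}^{\Sp(p+k,q+k)}\bigl(\pi'(\caO_K)\boxtimes{\det}^t\bigr)
\]
and shows (Lemma~\ref{l:sppq-main}) that $I(t)$ is irreducible for $0\le t\le 1/2$, that $I(1/2)=\pi'(\caO_K^\dagger)$ for a larger group, and --- crucially --- that unitarity of $\pi'(\caO_K)$ is \emph{equivalent} to unitarity of $I(1/2)$. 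Iterating this lands in the $A_\frq(\lambda)$ subclass, without ever invoking theta.

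The paper's approach buys simplicity and self-containment: it rests only on Vogan's weakly-fair unitarity theorem and an elementary complementary-series deformation, and it never confronts the rigid base cases you correctly flag as the main obstacle. Your approach, by contrast, would need to push theta lifts beyond the stable range and verify positivity there directly --- a genuine difficulty, and one the paper explicitly remarks is not obviously tractable by theta for the full class $\Unip'(\caO)$ (only the overlap with integral special unipotent representations is handled by theta in \cite{msz}). The bidirectional irreducibility-and-unitarity transfer along the deformation $I(t)$ is the idea your plan is missing; it converts the hard small cases into easy large ones rather than the reverse.
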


The two results have some overlap: if $d$ denotes Spaltenstein duality
as in \cite[Appendix]{bv2},  
\[
\chi'(\caO) = \chi(d(\caO))
\]
if and only if $d(\caO)$ is even, i.e.~if and only if $\chi(d(\caO))$ is integral; see Sections \ref{s:sppq-ic} and \ref{s:sostar-ic}.  Thus, Theorems \ref{t:intro1} and \ref{t:intro2} overlap exactly
in the integral special unipotent representations.  In this special case, another proof of unitarity using the theta correspondence has recently been given in \cite{msz}.  The other representations appearing in Theorems \ref{t:intro1}  and  \ref{t:intro2} do not appear
to be accessible by the techniques of the theta correspondence (but it should be noted that \cite{msz} also handles other classical groups which are much more complicated).

The proof of Theorem \ref{t:intro2} that we give below for $\Sp(p,q)$ in Sections \ref{s:intro2-proof} is conceptually very simple, and is essentially self-contained
apart from relying only on  Vogan's
unitarity results for weakly fair $\Aql$ modules (as recalled in Section \ref{s:coh-ind}) and elementary calculation.  We construct some unipotent representations
as $\Aql$ modules in Sections \ref{s:sppq-aql} and \ref{s:sostar-aql}, and therefore establish their unitarity.  For a unipotent representation $\pi$ which is not covered
by this construction, we use parabolic induction to induce $\pi$ to an irreducible representation $\pi^\dagger$ of a larger group.  The induction is engineered to
preserved both unitarity {\em and} nonunitary (Lemma \ref{l:sppq-main}).  We then recognize the representation $\pi^\dagger$ for the larger group as one of the unipotent
$\Aql$ whose unitarity we have already established; hence $\pi$ must also be unitarity, completing the proof.  
Section \ref{s:intro1-proof} contains details of the 
nonintegral special unipotent representations in Theorem \ref{t:intro1} not covered 
by Theorem \ref{t:intro2}

The details for $\SO^*(2n)$ are entirely similar.  They are briefly given in Section \ref{s:sostar}.

To conclude, we indicate one sense in which the unipotent representations studied here should serve as building blocks for the full unitary dual.  The following result is stated for $\Sp(p,q)$,
but the obvious version applies to $\SO^*(2n)$ as well.

\begin{conj}
\label{c:intro}
Suppose $\pi$ is a unitary representation of $G_\R = \Sp(p,q)$ with integral infinitesimal character.  Then there exists
\begin{enumerate}
\item[(a)] A $\theta$-stable parabolic subalgebra $\frq = \frl \oplus \fru$ such that the normalizer $L_\R$ of $\frl$ in $G_\R$ is isomorphic to 
\[
\Sp(p_0,q_0) \times \U(p_1,q_1) \times \cdots \times \U(p_r,q_r);
\]

\item[(b)] A representation $\pi_0 \in \Unip'(\caO_0)$ for some nilpotent orbit for $\Sp(p_0,q_0)$; and

\item[(c)]  Weakly fair $\Aql$ modules $\pi_i$ for $\U(p_i,q_i)$, $1\le i \le r$,
\end{enumerate}
such that $\pi$ is cohomologically induced from $\pi_0 \boxtimes \pi_1 \boxtimes \cdots \boxtimes \pi_r$ in the weakly fair range.
\end{conj}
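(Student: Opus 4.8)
The plan is to argue by induction on $\dim G_\R$. Fix an integral infinitesimal character $\lambda$ and an irreducible unitary $(\frg,K)$ module $\pi$ with infinitesimal character $\lambda$. Attach to $\pi$ its annihilator $I = \Ann_{U(\frg)}(\pi)$ and the nilpotent orbit $\caO_I\subset\frg$ dense in the associated variety of $I$. Using the explicit combinatorics of nilpotent orbits for $\Sp(p,q)$ and the recipe for $\chi'$ in Sections \ref{s:sppq-ic} and \ref{s:sppq-counting}, compare $\lambda$ with the minimal integral infinitesimal character $\chi'(\caO_I)$ attached to $\overline{\caO_I}$. If $\lambda=\chi'(\caO_I)$ and $I$ is the maximal primitive ideal $I'(\caO_I)$, then $\pi\in\Unip'(\caO_I)$ by definition, and the conjecture holds with $r=0$ and $L_\R=G_\R$.

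The substance of the argument is the opposite case, when $\lambda$ is strictly more positive than $\chi'(\caO_I)$ in the relevant cone, or when $I$ is not maximal at $\lambda$. Here one wants to produce a $\theta$-stable parabolic $\frq=\frl\oplus\fru$ with $L_\R\cong\Sp(p',q')\times\U(p_1,q_1)$ and an irreducible unitary $(\frl,L\cap K)$ module $\sigma$ in the weakly fair range with $\pi$ cohomologically induced from $\sigma$. The parabolic should be read off from the associated variety $\AV(\pi)\subset(\frg/\frk)^*$ together with the shape of $\lambda$: a noncompact coordinate direction along which $\lambda$ exceeds the value forced by $\caO_I$ singles out a $\theta$-stable Levi, and the cohomological induction and translation-principle machinery recalled in Section \ref{s:coh-ind} identifies $\pi$ as cohomologically induced from that Levi. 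The weakly-fair analogue of Lemma \ref{l:sppq-main} for cohomological induction---that it preserves both unitarity and nonunitarity for the representations in play---then forces $\sigma$ to be unitary. Apply the inductive hypothesis to the $\Sp(p',q')$ factor; the $\U(p_1,q_1)$ factor is governed by the (more tractable) analogue of the conjecture for $\U(p,q)$, in which every integral unitary representation should be an iterated weakly fair cohomological induction from a character, hence a weakly fair $\Aql$ module---substantial parts of this are due to Vogan. Composing the resulting chain of $\theta$-stable parabolics produces the single $\frq$, the unipotent representation $\pi_0\in\Unip'(\caO_0)$ of the bottom symplectic factor, and the weakly fair $\Aql$ modules $\pi_i$.

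The main obstacle is the reduction step itself: showing that \emph{every} integral unitary $\pi$ that is not already unipotent \emph{is} cohomologically induced in the weakly fair range. This is tantamount to determining the entire integral unitary dual, and in particular to proving that all of the remaining irreducible representations at each integral infinitesimal character are \emph{nonunitary}---something the constructive methods of this paper cannot reach on their own. For $\Sp(p,q)$ and $\SO^*(2n)$ the relevant categories of representations are small enough that one expects a Kazhdan--Lusztig--Vogan analysis of the possible primitive ideals and $K$-orbits, combined with the signature-character and Dirac-inequality bookkeeping used by Barbasch for the split and complex classical groups (and by \cite{msz} via theta-lifting in the integral special unipotent case), to settle it; but carrying this out is exactly why the statement above is phrased as a conjecture rather than a theorem.
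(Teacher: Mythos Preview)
The statement is a \emph{conjecture} in the paper, not a theorem; the paper offers no proof and explicitly says that ``the open question is whether these representations are exhaustive.''  Your proposal recognizes this and is honest about where the gap lies: the reduction step showing that every integral unitary $\pi$ not already in some $\Unip'(\caO)$ is cohomologically induced in the weakly fair range from a proper $\theta$-stable Levi.  That step is exactly the content of the conjecture, and your final paragraph correctly identifies it as equivalent to determining the full integral unitary dual.

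Your inductive outline is a reasonable shape for how such a proof might eventually go, and matches the philosophy of the paper.  But be aware that the specific mechanism you propose---reading off a $\theta$-stable parabolic from a ``noncompact coordinate direction along which $\lambda$ exceeds the value forced by $\caO_I$''---is not established anywhere in the paper and is itself the hard part.  The paper's Lemma \ref{l:sppq-main} goes in the opposite direction (real parabolic induction enlarging the group to reach an $\Aql$ case), and there is no cohomological-induction analogue proved here that would let you descend.  Likewise, the assertion that the $\U(p_1,q_1)$ factor is ``more tractable'' and that ``substantial parts of this are due to Vogan'' is optimistic: the full integral unitary dual of $\U(p,q)$ is not settled by Vogan's results on weakly fair $\Aql$ modules alone.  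So your proposal is best read not as a proof but as an informed explanation of why the statement remains conjectural---which is precisely the status it has in the paper.
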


Theorem \ref{t:intro2} (and Vogan's results on cohomological induction in the weakly fair range \cite{v:unit}) imply that
all the representations appearing in the conjecture are unitary; the open question is whether these representations
are exhaustive.

\section{background}
\label{s:back}
\subsection{General notation}
\label{s:gen-not}
Let $G_\bbR$ denote the real points of a connected reductive algebraic group $G$ with maximal 
compact subgroup $K_\bbR$ corresponding to a Cartan involution $\theta$.  Write $\frg = \frk \oplus
\frp$ for the complexified Cartan decomposition.  
When convenient, we will identify $(\frg/\frk)^*$ with $\frp$.  

We will let  $\frh = \frt \oplus \fra$ denote a maximally compact Cartan
subalgebra of $\frg$, and $\Delta^+$ a choice of positive roots of $\frh$ in $\frg$.

\subsection{Nilpotent orbits}
\label{s:nilp}
According to the Sekiguchi correspondence (e.g.~\cite[Chapter 9]{cm}), there is a canonical bijection between the nilpotent $K$ orbit on $\frp$
and the $G_\R$ orbits on $\frg_\R$.  Their number is finite.

\subsection{Associated varieties and cycles}
\label{s:av}
If $X$ is a finitely generated $(\frg,K)$ module, then we let $\AV(X)$ denote
its associated variety as defined in \cite{v:av}.  
In particular, we may
write
\[
\AV(X) = \ol{\caO_K^1} \cup \cdots \cup \ol{\caO_K^l}
\]
for orbits $\caO_K^i$ of $K$ on the nilpotent elements in $(\frg/\frk)^* \simeq \frp$.  If we further
assume $X$ is irreducible, \cite{v:av} establishes that
\[
\caO := G\cdot \caO_K^i 
\]
is well-defined independent of the choice of $i$ and is dense in the associated variety
of the annihilator $I_X$ of $X$ (i.e.~the variety in $\frg^*$ cut out by $\mathrm{gr}(I_X) \triangleleft S(\frg)$).
As in \cite{v:av}, the construction of the associated variety
may be refined to produce a positive integral linear combination
\[
\AC(X) = \sum_i m_i {\caO_K^i},
\]
called the associated cycle of $X$.

\subsection{Associated varieties and special unipotent representations}
\label{s:av-unip}
The following is a useful criterion for determining if a representation is special
unipotent.

\begin{prop}
\label{p:av-unip}
Recall the notation of the introduction.  Fix $\caO^\vee$, set $\caO = d(\caO^\vee)$,
and suppose $\caO_K$ is a $K$ orbit 
such that $G \cdot \caO_K = \caO$.  Suppose $\pi$ is an irreducible
$(\frg,K)$ module with infinitesimal character $\chi(\caO^\vee)$
such that $\caO_K$ is dense in an irreducible component of $AV(\pi)$.
(For example, suppose $AV(\pi) = \ol{\caO_K}$.)
Then $\pi$ is special unipotent attached to $\caO^\vee$.  
\end{prop}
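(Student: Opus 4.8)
The plan is to pin down the annihilator $I_\pi = \Ann_{U(\frg)}(\pi)$ by a squeeze argument: show it lies between $Z(\caO^\vee)$ and the maximal primitive ideal $I(\caO^\vee)$, and that it is itself maximal among primitive ideals with that central character. First I would observe that $\pi$ has infinitesimal character $\chi(\caO^\vee)$, so by the Harish-Chandra isomorphism $I_\pi$ contains the maximal ideal $Z(\caO^\vee) \triangleleft Z(\frg)$; hence by definition of $I(\caO^\vee)$ as the \emph{unique} maximal primitive ideal containing $Z(\caO^\vee)$, it suffices to show $I_\pi$ is maximal in the inclusion order on primitive ideals (equivalently, that $I_\pi = I(\caO^\vee)$). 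The standard tool here is the associated variety: $\AV(U(\frg)/I_\pi) = \ol{G \cdot \caO_K}$ by the cited results of \cite{v:av}, and the hypothesis $G \cdot \caO_K = \caO = d(\caO^\vee)$ together with the fact (Borho--Brylinski, Joseph) that $\AV(U(\frg)/I(\caO^\vee)) = \ol{d(\caO^\vee)}$ shows that $I_\pi$ and $I(\caO^\vee)$ have the same associated variety.

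Next I would upgrade ``same associated variety'' to ``equal ideal.'' Two primitive ideals with the same infinitesimal character and the same associated variety need not in general coincide, but among primitive ideals with a given integral (or, more generally, the relevant regular-or-appropriately-singular) infinitesimal character, the one whose associated variety equals $\ol{d(\caO^\vee)}$ and which is \emph{maximal} is unique — this is exactly the characterization giving $I(\caO^\vee)$. The point is that any primitive ideal strictly containing $I_\pi$ would have strictly smaller associated variety (dimension drops under strict inclusion of primitive ideals with fixed central character, by the theory of Goldie rank / Gelfand--Kirillov dimension), so $\dim \AV(U(\frg)/I_\pi) = \dim \ol{\caO}$ forces $I_\pi$ to be maximal; combined with $Z(\caO^\vee) \subseteq I_\pi$ and uniqueness of the maximal primitive ideal over $Z(\caO^\vee)$, we get $I_\pi = I(\caO^\vee)$.

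Finally, by Definition \ref{d:su}, the equality $\Ann_{U(\frg)}(\pi) = I(\caO^\vee)$ says precisely that $\pi$ is special unipotent attached to $\caO^\vee$, completing the proof. The main obstacle is the dimension/maximality step: one must know that $\caO_K$ being dense in a component of $\AV(\pi)$ genuinely forces $\dim \AV(U(\frg)/I_\pi) = \dim \caO$ (so that no larger primitive ideal with the same central character sits above $I_\pi$), which relies on the equality $\dim \AV(\pi) = \tfrac12 \dim \AV(U(\frg)/I_\pi)$ for irreducible $\pi$ and the fact that $d(\caO^\vee)$ is the unique dense orbit of maximal dimension in the relevant variety — i.e., one is using that $d$ is precisely Spaltenstein--Barbasch--Vogan duality, for which $\ol{d(\caO^\vee)}$ is the associated variety of $I(\caO^\vee)$. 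The parenthetical case $\AV(\pi) = \ol{\caO_K}$ is then immediate since there is only one component to consider.
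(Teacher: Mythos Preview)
Your proposal is correct and follows essentially the same approach as the paper: both arguments establish that $\AV(I_\pi) = \ol{\caO} = \AV(I(\caO^\vee))$ (the first equality from \cite{v:av}, the second from \cite[Appendix]{bv2}), and then use the Borho--Jantzen fact that strict containment of primitive ideals with a fixed central character strictly decreases the associated variety to conclude $I_\pi = I(\caO^\vee)$. The paper organizes the last step as ``$I_\pi \subseteq I(\caO^\vee)$ by maximality, then equal since same associated variety,'' while you phrase it as ``$I_\pi$ is itself maximal since nothing with smaller associated variety can sit above it, then equal by uniqueness''; these are the same argument.
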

\begin{proof}
The hypothesis on $\pi$ imply that the associated variety of its annihilator $I_\pi$ equals $\ol{\caO}$.  
Meanwhile, by the appendix to \cite{bv2}, the associated variety of the primitive ideal $I(\caO^\vee)$
is also $\ol{\caO}$.  Since $I_\pi$ contains $Z(\caO)$, and since $I(\caO^\vee)$ is maximal with this property,
$I_\pi \subset I(\caO^\vee)$.   Since they also have the same associate variety, \cite[Korollar
3.4 and Satz 7.1]{borho-jantzen} show that they are equal.  Hence $\pi$ has annihlated $I(\caO^\vee)$
and is therefore special unipotent.
\end{proof}

\subsection{Associated varieties and real parabolic induction}
\label{s:real-ind}
Let $P_\R = L_\R N_\R$ be a real parabolic subgroup of $G_\R$.  Let
$\pi$ be an irreducible representation of $L_\R$ and extended trivially to $P_\R$.  
Then we can form the (normalized) induced representation
$\Ind_{P_\R}^{G_\R}(\pi).$
We will need to record how associated varieties behave with respect to real induction  \cite{babo}.

\begin{prop}
\label{p:av-real-ind}
In the setting above, write
\[
\AV(\pi) = \ol{\caO_{L\cap K}^1} \cup \cdots \cup  \ol{\caO_{L\cap K}^l} 
\]
and write $\caO_{\R,L}^i$ for the nilpotent $L_\R$ on $\frl_\R$ corresponding to $\caO_{L \cap K}^i$
via the Sekiguchi correspondence.
Then
\[
G_\R \cdot \left (
\bigcup_i \ol{\caO_{\R,L}^i} + \frn_\R
\right )
\]
is a union of closures of equidimensional $G_\R$ orbits on $\frg_\R$ which we may write as
\[
\ol{\caO_{\bbR^1}} \cup \cdots \cup  \ol{\caO_{\bbR}^\ell}.
\]
Let $\caO_K^i$ denote the $K$ orbit on $\frp$ corresponding to $\caO_\R^i$ via the
Sekiguchi correspondence.
Then
\[
\AV
\left (
\Ind_{P_\R}^{G_\R}(\pi)
\right )
= 
\bigcup_{i=1}^\ell \ol{\caO_K^i}.
\]
\end{prop}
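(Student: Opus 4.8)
The plan is to transfer the computation from $K$-orbits on $\frp$ to $G_\R$-orbits on $\frg_\R$, carry it out on the group side using the known behaviour of wavefront sets under real parabolic induction, and then transfer back via the Sekiguchi correspondence. Fix an invariant form to identify $\frg_\R^*$ with $\frg_\R$ (and $\frl_\R^*$ with a summand). For a finitely generated $(\frg,K)$-module $X$ write $\mathrm{WF}(X)\subseteq\caN(\frg_\R)$ for the wavefront set at the identity, i.e.\ the support of the leading term of the Harish-Chandra germ expansion of the distribution character of $X$ near $e$; it is a closed $G_\R$-stable cone, hence a finite union of closures of nilpotent $G_\R$-orbits. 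First I would recall (as in the setup of \cite{babo}; conceptually due to Rossmann, Barbasch-Vogan and Schmid-Vilonen) that $\mathrm{WF}(X)$ and $\AV(X)$ correspond orbit by orbit under the Sekiguchi correspondence --- in fact the characteristic cycle and the wavefront cycle agree under it. Applying this to $\pi$ over $L_\R$ and to $\Ind_{P_\R}^{G_\R}(\pi)$ over $G_\R$ reduces the proposition to the identity
\[
\mathrm{WF}\left(\Ind_{P_\R}^{G_\R}(\pi)\right)=\ol{\,G_\R\cdot\left(\mathrm{WF}(\pi)+\frn_\R\right)\,},\qquad\text{where }\mathrm{WF}(\pi)=\ol{\caO^1_{\R,L}}\cup\cdots\cup\ol{\caO^l_{\R,L}}.
\]
Note any element of $\caN(\frl_\R)+\frn_\R$ is automatically nilpotent in $\frg_\R$ (a nilpotent element of the Levi acts nilpotently on each graded piece of $\frg$, and the nilradical only moves the grading), so the right-hand side lies in $\caN(\frg_\R)$ without further comment.

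The displayed identity is the main theorem of \cite{babo}; this is the crux and the one step that is not formal bookkeeping, so it is the main obstacle. Conceptually it reflects the fact that the distribution character of $\Ind_{P_\R}^{G_\R}(\pi)$ is the induced character of $\pi$: on passing to the leading term of the germ expansion near $e$, induction of characters becomes, at the level of supports, ``adjoin the nilradical $\frn_\R$ and saturate under $G_\R$''. Equivalently one can argue through characteristic cycles, realizing $\Ind_{P_\R}^{G_\R}(\pi)$ by sections over the real partial flag manifold $G_\R/P_\R$ and pushing forward the relevant conormal data under a moment map. Either way this is microlocal analysis and I would simply quote it from \cite{babo}; everything surrounding it is bookkeeping.

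It remains to deduce the equidimensionality assertion and to translate back to $K$-orbits. Since $\pi$ is irreducible, Vogan's theorem forces the components $\caO^i_{L\cap K}$ of $\AV(\pi)$ to lie over a single complex nilpotent orbit $\caO_L\subseteq\frl$; hence the $\caO^i_{\R,L}$ are real forms of the one orbit $\caO_L$ and so all have the same dimension. Now $\ol{G_\R\cdot(\bigcup_i\ol{\caO^i_{\R,L}}+\frn_\R)}$ is the union over $i$ of $\ol{G_\R\cdot(\ol{\caO^i_{\R,L}}+\frn_\R)}$, and each of the latter is a finite union of nilpotent $G_\R$-orbit closures whose complexification is the (single) induced complex orbit closure $\ol{G\cdot(\ol{\caO_L}+\frn)}$; hence every $G_\R$-orbit closure that occurs has real dimension $\dim_\C\left(G\cdot(\ol{\caO_L}+\frn)\right)$, independent of $i$. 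This is exactly the assertion that $\ol{G_\R\cdot(\bigcup_i\ol{\caO^i_{\R,L}}+\frn_\R)}=\ol{\caO_\R^1}\cup\cdots\cup\ol{\caO_\R^\ell}$ is a union of closures of equidimensional $G_\R$-orbits. Feeding each $\caO_\R^i$ back through the Sekiguchi correspondence on $G_\R$ produces the $K$-orbits $\caO_K^i\subseteq\frp$ named in the statement, and combining this with the reduction of the first paragraph yields $\AV\left(\Ind_{P_\R}^{G_\R}(\pi)\right)=\bigcup_{i=1}^\ell\ol{\caO_K^i}$, as required.
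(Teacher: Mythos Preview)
Your argument is valid but runs exactly counter to the paper's intent. The paper gives no proof here; it simply cites \cite{babo} and explicitly remarks that the cumbersome formulation via Sekiguchi is ``meant to avoid any discussion of asymptotic supports and the Barbasch-Vogan conjecture,'' with \cite{babo} supplying a self-contained proof ``without recourse to the proof of the Barbasch-Vogan conjecture.'' Your very first reduction step is precisely that conjecture (the Schmid--Vilonen identification of $\mathrm{WF}$ with $\AV$ under Sekiguchi), so you are invoking the machinery the proposition was engineered to sidestep.

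There is also a misattribution. The identity $\mathrm{WF}(\Ind_{P_\R}^{G_\R}\pi)=\ol{G_\R\cdot(\mathrm{WF}(\pi)+\frn_\R)}$ is not the main theorem of \cite{babo}; that paper, as its title indicates, works directly with associated varieties, computing $\AV(\Ind\pi)$ by filtering the induced Harish-Chandra module and passing to the associated graded on the $K$-side, never touching distribution characters or wavefront sets. The wavefront-set formula you quote is an older character-theoretic fact going back to Barbasch--Vogan's analysis of induced characters near the identity. So your route stacks two heavyweight inputs (Schmid--Vilonen plus the character-theoretic induction formula) to recover what \cite{babo} obtains by an elementary filtration argument. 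The advantage of the paper's route is independence from the deep microlocal analysis; the advantage of yours is conceptual transparency once that analysis is taken for granted. Your equidimensionality argument via the common complexification $\ol{G\cdot(\caO_L+\frn)}$ is fine in either framework.
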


This cumbersome formulation is meant to avoid any discussion of asymptotic supports and the Barbasch-Vogan conjecture.  A self-contained
proof of Proposition \ref{p:av-real-ind} is given in  \cite{babo} (without recourse to the proof of the Barbasch-Vogan conjecture).

\subsection{Cohomological induction}
\label{s:coh-ind}
Let $G_\bbR$ be as in Section \ref{s:gen-not}.  Fix a $\theta$-stable parabolic subalgebra $\frq = \frl \oplus \fru$
such that $\frh \subset \frl$ and the roots of $\frh$ in $\fru$ are contained in $\Delta^+$.  Let $\rho(\fru)$ denote
the half-sum of the roots of $\frh$ in $\fru$,
and write $L$ for the normalizer of $\frl$ in $G$.

Let $\bbC_\lam$ denote a one-dimensional $(\frl, L \cap K)$ module with differential $\lam \in \frh^*$.
Write $A_\frq(\lam)$ for the $(\frg,K)$ module cohomologically induced as in \cite[Chapter V]{kv}.
Here are the some properties of these modules that we will need.

\begin{theorem}
\label{t:aql}
In the setting of the previous paragraph, 
\begin{enumerate}
\item[(i)] The infinitesimal character of $A_\frq(\lam)$ is represented by $\lam  +\rho$.

\item[(ii)] Suppose that $\lam$ is in the good range for $\frq$, 
\[
\langle \lam + \rho, \alpha ^\vee \rangle \geq 0 \text{ for all $\alpha \in \Delta(\fru)$.}
\]
Then $A_\frq(\lambda)$ is nonzero, irreducible, and unitary.

\item[(iii)]
Suppose that $\lam$ is in the weakly fair range for $\frq$, 
\[
\langle \lam + \rho(\fru), \alpha ^\vee \rangle \geq 0 \text{ for all $\alpha \in \Delta(\fru)$.}
\]
Then $A_\frq(\lambda)$ is unitary (but possibly irreducible or zero) with associated variety
\[
\AV(A_\frq(\lambda)) = K \cdot (\fru \cap \frp).
\]

\item[(iv)]  In the setting of (iii), suppose in addition that 
\[
\dim
\left(
G\cdot (\fru \cap \frp) 
\right )
= 
\dim 
\left (
G \cdot \fru
\right ).
\]
Then $\Aql$ vanishes only if there is a $\theta$-stable parabolic $\frq' = \frl'  \oplus \fru'$ containing $\frq$ such that $L'/L$ is compact and $\lam+\rho$ is singular for a root of $\frh$ in $\frl'$.  Otherwise
$\Aql$ is nonzero and irreducible.  Moreover the multiplicity of the dense $K$ orbit $\caO_K$ in $K \cdot (\fru \cap \frp)$ in the
associated cycle of $\Aql$ divides the order of the component group of the centralizer in $K$ of a point of $\caO_K$.
\end{enumerate}
\end{theorem}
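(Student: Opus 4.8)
The plan for (iv), granting (i)--(iii), is to combine translation functors with a reduction to a Levi subalgebra, using the dimension hypothesis to control the underlying geometry. First I record the geometric content of the hypothesis. Let $Q\cap K$ denote the parabolic subgroup of $K$ with Levi factor $L\cap K$ and nilradical $U\cap K$; the associated vector bundle $K\times_{Q\cap K}(\fru\cap\frp)$ over the partial flag variety $K/(Q\cap K)$ has dimension $\dim_\C(\fru\cap\frk)+\dim_\C(\fru\cap\frp)=\dim_\C\fru$, and the moment map $\mu\colon K\times_{Q\cap K}(\fru\cap\frp)\to\ol{K\cdot(\fru\cap\frp)}$, $(k,\xi)\mapsto\mathrm{Ad}(k)\xi$, is proper with irreducible source; hence $\ol{K\cdot(\fru\cap\frp)}=\ol{\caO_K}$ for a single dense $K$-orbit $\caO_K$, and by (iii) this is $\AV(\Aql)$ whenever $\Aql\neq0$. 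Since the Richardson orbit satisfies $\dim_\C(G\cdot\fru)=2\dim_\C\fru$, every nilpotent $K$-orbit on $\frp$ satisfies $\dim_\C(G\cdot\caO_K)=2\dim_\C\caO_K$, and $\fru\cap\frp\subseteq\fru\subseteq\ol{G\cdot\fru}$, the hypothesis $\dim(G\cdot(\fru\cap\frp))=\dim(G\cdot\fru)$ is equivalent to $\dim_\C\ol{\caO_K}=\dim_\C\fru$, i.e.\ to $\mu$ being generically finite (so, in characteristic zero, generically \'etale) onto its image.

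The ``if'' half of the vanishing criterion is the easy one. Given $\frq'=\frl'\oplus\fru'\supseteq\frq$ with $L'/L$ compact and $\lam+\rho$ singular for a root of $\frh$ in $\frl'$, compactness forces $\fru\cap\frl'\subseteq\frk$, so induction in stages \cite{kv} gives $\Aql=A_{\frq'}(A^{\frl'}_{\frq\cap\frl'}(\lam))$ with $A^{\frl'}_{\frq\cap\frl'}(\lam)$ a finite-dimensional $\frl'$-module (up to a cohomological degree shift). Since $\rho(\fru')$ is $W(\frl',\frh)$-fixed, $\lam+\rho$ is singular for a root of $\frl'$ exactly when the $\frl'$-infinitesimal character $\lam+\rho_{\frl'}$ is singular, and then Kostant's theorem forces $A^{\frl'}_{\frq\cap\frl'}(\lam)=0$, hence $\Aql=0$. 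For the converse I would translate from the good range: choose $\nu$ dominant and regular with $\lam':=\lam+\nu$ strictly in the good range for $\frq$ and $\lam'+\rho$ regular, so $X':=A_\frq(\lam')$ is nonzero, irreducible, unitary by (ii), with $\AV(X')=\ol{\caO_K}$ and, by the standard good-range computation of the associated cycle of a derived functor module with one-dimensional inducing data, $\AC(X')=\mu_*[K\times_{Q\cap K}(\fru\cap\frp)]$; thus the multiplicity of $\caO_K$ in $\AC(X')$ is $\deg\mu=|\mu^{-1}(\xi)|$ for $\xi\in\caO_K$. Cohomological induction commutes with translation \cite{kv}, so $\Aql=\psi^{\lam+\rho}_{\lam'+\rho}(X')$; the heart of the matter is to show that when no Levi $\frl'$ with $L'/L$ compact makes $\lam+\rho$ singular, this translation neither annihilates the bottom-layer $K$-type of $X'$ (the $K$-type of highest weight $\lam+2\rho(\fru\cap\frp)$) nor alters the leading term of its $K$-character --- equivalently, the new walls of $\lam+\rho$ do not enter the $\tau$-invariant of $X'$ beyond those coming from roots of $\frl$. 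Granting this, $\Aql\neq0$, it contains the bottom-layer $K$-type with multiplicity one, and $\mathrm{mult}_{\caO_K}\AC(\Aql)=|\mu^{-1}(\xi)|$; since $\Aql$ is unitarizable by (iii), hence a finite direct sum of irreducible unitary modules each with associated variety inside the irreducible $\ol{\caO_K}$, the associated-cycle identity together with control of the remaining constituents forces a single summand, and $\Aql$ is irreducible.

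The divisibility is now purely geometric. Take the Stein factorization $K\times_{Q\cap K}(\fru\cap\frp)\to Y\to\ol{\caO_K}$ of $\mu$: the first map is birational (the generic fibers of $\mu$ being finite) with $Y$ irreducible, and $Y\to\ol{\caO_K}$ is finite; over the smooth locus $\caO_K$, where $\mu$ is \'etale, the first map is an isomorphism and $Y$ restricts to a connected --- hence single-$K$-orbit --- covering of $\caO_K=K/Z_K(\xi)$. Therefore $\mu^{-1}(\xi)$ is a single orbit of the component group $A_K(\xi):=\pi_0(Z_K(\xi))$, so $\mathrm{mult}_{\caO_K}\AC(\Aql)=|\mu^{-1}(\xi)|$ divides $|A_K(\xi)|$.

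The main obstacle consists of the two technical points in the second paragraph: determining precisely when the translation $\psi^{\lam+\rho}_{\lam'+\rho}$ annihilates the good-range module $X'$ and matching this with the condition on Levi subalgebras $\frl'$ with $L'/L$ compact; and showing that when $\Aql\neq0$ the dimension hypothesis forces irreducibility (no constituents beyond the one carrying the bottom-layer $K$-type) and keeps the multiplicity of $\caO_K$ equal to $|\mu^{-1}(\xi)|$. The dimension hypothesis is indispensable for both --- it is exactly what rules out vanishing or extra constituents invisible at every Levi subalgebra, so that the clean criterion holds; without it one should expect only a weaker, less explicit statement. (For $\Sp(p,q)$ and $\SO^*(2n)$ the group $K$ is connected, so the irreducibility of the bundle and of $\AV(\Aql)$ is automatic; the general case needs only minor changes.)
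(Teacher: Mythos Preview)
The paper does not give a self-contained proof of this theorem at all: its ``proof'' is a sentence-by-sentence list of citations (normalization from \cite{kv} for (i); \cite{v:unit} and \cite[Chapter VIII]{kv} for (ii)--(iii); \cite[Theorem 6.5]{v:ds} for the nonvanishing and irreducibility in (iv); \cite[Proposition 3.12]{t:ltc} for the multiplicity statement in (iv)).  So there is nothing to compare your argument to except those cited sources, and in fact your sketch is precisely a recapitulation of their strategies.

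Your moment-map treatment of the divisibility statement is exactly the content of \cite[Proposition 3.12]{t:ltc}: the dimension hypothesis makes $\mu$ generically finite, irreducibility of the bundle forces $\mu^{-1}(\caO_K)$ to be a single $K$-orbit, hence $Z_K(\xi)$ acts transitively on the finite fiber $\mu^{-1}(\xi)$, and the connected component $Z_K(\xi)^\circ$ acts trivially, so $|\mu^{-1}(\xi)|$ divides $|A_K(\xi)|$.  That part of your argument is essentially complete and correct; the Stein-factorization packaging is a slight overkill but harmless.

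Your approach to nonvanishing and irreducibility --- translate from a good-range $A_\frq(\lam')$, track the bottom-layer $K$-type, and match the walls of $\lam+\rho$ against the $\tau$-invariant --- is exactly Vogan's argument in \cite{v:ds}.  You are right to flag the two technical points (that the translation kills $X'$ precisely under the Levi condition stated, and that when it does not, no extra constituents appear) as the genuine content; these are not routine, and they are what \cite[Theorem 6.5]{v:ds} supplies.  Your sketch of the ``if'' direction of vanishing via induction in stages and Borel--Weil--Bott on the compact factor $L'/L$ is correct.  What you have written is therefore an accurate outline of the cited proofs, with the geometric half done and the representation-theoretic half honestly deferred to Vogan --- which is more than the paper itself provides.
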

\begin{proof}[Sketch.]
Part (i) is built into the normalization we adopt from \cite{kv}.  Parts (ii) and (iii), apart from the associated variety statements, are proved in \cite{v:unit}; see
\cite[Chapter VIII]{kv}.  The irreducibility
and nonvanishing assertions in (iv) are proved in \cite[Theorem 6.5]{v:ds}.
The multiplicity of the associated cycle in (iv) (under the extra hypothesis in (iv)) counts the number of points in a single orbits of the centralizer
in $K$ of a point of $\caO_K$; see \cite[Proposition 3.12]{t:ltc}.  Hence the assertion about multiplicities in (iv) follows.
\end{proof}

\section{details for $\Sp(p,q)$}
\label{s:sppq}
In this section we restrict to $G_\R = \Sp(p,q)$ and use the notation of Section \ref{s:gen-not} in this setting.  In particular
$G=\Sp(2n,\bbC)$ with $n=p+q$.  We identify $\frh^*$ with $\bbC^n$ using the standard coordinates.

\subsection{Nilpotent orbits}
\label{s:sppq-nilp}
The nilpotent orbits of $G$ on $\frg$ are parametrized by partitions of $2n$ in which all odd parts occur with even multiplicity.
The correspondence is given by taking Jordan form.

The nilpotent $K$ orbits on $\frp$ for $\Sp(p,q)$ are parametrized by certain signed tableau of signature $(2p,2q)$
(\cite[Chapter 9]{cm}).  These are Young diagrams of size $2n$ in which the boxes are filled with $2p$ plus signs and $2q$ minus
 signs alternating across rows, modulo the equivalence of interchanging rows of equal length, satisfying certain restrictions described below.
To each (equivalence class of) signed tableau, we can attached a string 
\[
(m_1)_+^{k_{m_1}^+}(m_1)_-^{k_{m_1}^-}(m_2)_+^{k_{m_2}^+}(m_2)_-^{k_{m_2}^-}\dots
\]
with $m_1>m_2>\cdots$ indicating that there are $k_{m_i}^+$ rows of length $m_i$ beginning with $+$, and similarly for $k_{m_i}^-$.  The restrictions are
 \begin{enumerate}
 \item[(i)] For every odd $m$, $k_m^\vareps$ is even; that is, all odd parts beginning with a fixed sign occur an even number of times;
  \item[(i)] For each even part $m$, $k_m^+ = k_m^-$ is even; that is,  the number of rows of a fixed even length beginning with $+$
  equals the number rows of that length beginning with $-$.
  \end{enumerate}
In particular, all parts occur with even multiplicity (or, equivalently, all columns have even length).  

This parametrization is arranged
so that the shape of signed tableau parametrizing a $K$ orbit on $\frp$ is its Jordan form.  So a complex nilpotent orbit
$\caO$ meets $\frp$ if and only if there is an arrangement of signs in its Jordan form satisfying the above conditions.

\subsection{Infinitesimal characters}
\label{s:sppq-ic}
Fix a nilpotent orbit $\caO$ for $G$, and consider the partition of $2n$ corresponding to its Jordan form.  
Let $c_1\ge c_2 \ge \cdots$ denote the transposed partition.  As remarked in Section \ref{s:sppq-nilp}, a 
necessary condition for $\caO$ to meet $\frp$ is that all
$c_i$ are even, so write $c_i =2d_i$.  To each $d_i$ associate a string of nonnegative integers,
\[
\text{$(1,\dots,d_i)$  if $i$ is odd and  $(0,\dots,d_{i-1})$ if $i$ is even}.
\]
Let $\chi'(\caO)$ denote the concatenation of these strings, and interpret $\chi'(\caO)$ as an infintesimal character for
$\Sp(2n,\bbC)$ in the standard coordinates.  For example, if $\caO$ is zero with corrresponding partition $1+\cdots +1=2n$,
then $c_1 =2n$, and $\chi'(\caO) = (1,2,\dots,n)$, i.e.~$\rho$.  If $\caO$ corresponds to the partition $n+n=2n$, then
$c_1=\cdots=c_n =2$, and $\chi'(\caO)=(1,0,1,0,\dots,1,0). $

Let $Z'(\caO)$ denote the maximal ideal in $Z(\frg)$ corresponding to the $W$ orbit of $\chi'(\caO)$ under the Harish-Chandra
isomorphism, and let $I'(\caO)$ denote the unique maximal primitive ideal containing $Z'(\caO)$.  

\begin{prop}
\label{p:sppq-pi}
With the notation just introduced, $I'(\caO)$ is
the unique primitive ideal containing $Z'(\caO)$ whose associated variety is the closure of $\caO$.
\end{prop}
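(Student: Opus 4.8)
The plan is to reduce the statement to a known classification of primitive ideals with a given integral infinitesimal character in type $C$, together with a combinatorial identification of the associated varieties. First I would observe that $\chi'(\caO)$ is an integral infinitesimal character (this is visible from the explicit strings $(1,\dots,d_i)$ or $(0,\dots,d_{i-1})$ defining it), so the primitive ideals containing $Z'(\caO)$ are exactly those of the form $\Ann_{U(\frg)}(L(w\cdot\chi'(\caO)))$ for simple highest weight modules in the block of $\chi'(\caO)$; these are classified by the left cells of the integral Weyl group acting at $\chi'(\caO)$. The uniqueness of a \emph{maximal} such primitive ideal $I'(\caO)$ is automatic from the general theory (as already invoked in the introduction), so the real content is the identification of its associated variety with $\ol{\caO}$.

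The key step is a dimension/combinatorics count. By the Borho--Brylinski/Joseph theory, the associated variety of any primitive ideal with infinitesimal character $\chi'(\caO)$ is the closure of a single nilpotent orbit $\caO''$ in $\frg \cong \frg^*$, and as $I$ ranges over primitive ideals containing $Z'(\caO)$ the orbit $\caO''$ ranges over a set of orbits all of whose dimensions are at most $\dim\caO$, with equality forcing $\caO'' = \caO$ when the maximal one is unique. Concretely, I would compute the Goldie rank / Gelfand--Kirillov dimension attached to $Z'(\caO)$: the GK dimension of $U(\frg)/I$ for the maximal $I$ equals $\dim \caO$ precisely because the special-orbit combinatorics (Barbasch--Vogan, Lusztig's $a$-function) match the string $\chi'(\caO)$ to the orbit $\caO$. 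The cleanest route is to verify directly that $\chi'(\caO)$ is, up to the Spaltenstein-duality bookkeeping recalled in the introduction, the infinitesimal character $\chi(d(\caO))$ \emph{when $d(\caO)$ is even}, and that in general the string defining $\chi'(\caO)$ is the one whose associated ``model'' orbit under the Barbasch--Vogan algorithm is exactly $\caO$; this is where the transposed partition $c_i = 2d_i$ and the alternating $(1,\dots,d_i)/(0,\dots,d_{i-1})$ pattern do their work. Granting this, the appendix to \cite{bv2} (as used already in Proposition \ref{p:av-unip}) identifies the associated variety of the corresponding maximal primitive ideal with $\ol{\caO}$, and then \cite[Korollar 3.4 und Satz 7.1]{borho-jantzen} give that any primitive ideal containing $Z'(\caO)$ with associated variety $\ol{\caO}$ and contained in (hence equal to) the maximal one is $I'(\caO)$ itself — i.e.\ $I'(\caO)$ is the \emph{unique} such primitive ideal.

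So the skeleton is: (1) note integrality of $\chi'(\caO)$ and recall that primitive ideals over $Z'(\caO)$ are parametrized by left cells, with a unique maximal member $I'(\caO)$; (2) carry out the combinatorial computation identifying the orbit attached to the string $\chi'(\caO)$ with $\caO$ (equivalently, computing the GK dimension of $U(\frg)/I'(\caO)$ as $\dim\caO$), using the transposed-partition recipe of Section \ref{s:sppq-ic} and the Barbasch--Vogan/Spaltenstein duality dictionary; (3) invoke the appendix to \cite{bv2} to get $\AV(I'(\caO)) = \ol{\caO}$; (4) conclude uniqueness by Borho--Jantzen (\cite[Korollar 3.4, Satz 7.1]{borho-jantzen}): any primitive ideal $I \supseteq Z'(\caO)$ with $\AV(I) = \ol{\caO}$ satisfies $I \subseteq I'(\caO)$ by maximality and has the same associated variety, hence $I = I'(\caO)$.

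The main obstacle is step (2): verifying that the ad hoc string $\chi'(\caO)$ really is the infinitesimal character whose maximal primitive ideal sits over $\ol\caO$. For even $d(\caO)$ this is the identity $\chi'(\caO) = \chi(d(\caO))$ already flagged in the introduction and is a short check; the genuinely new case is when $d(\caO)$ is not even, where one must run the Barbasch--Vogan construction (or equivalently match Lusztig's symbol / the $a$-invariant of the corresponding cell) against the explicit partition data $c_i = 2d_i$. I expect this to be a finite but somewhat intricate manipulation of partitions and their Spaltenstein duals in type $C$, and it is the only place where the specific normalization chosen in Section \ref{s:sppq-ic} is essential.
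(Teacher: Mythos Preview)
Your overall logical skeleton is sound: take $I'(\caO)$ as the maximal primitive ideal by definition, identify its associated variety as $\ol{\caO}$, then deduce uniqueness via Borho--Jantzen from the fact that any primitive ideal $I$ with $\AV(I)=\ol{\caO}$ sits inside $I'(\caO)$ and has the same associated variety. The paper's proof, however, runs in a different order and by a different mechanism: it appeals directly to the Barbasch--Vogan classification of primitive ideals by standard domino tableaux \cite{bv:class}, verifies by inspection that there is exactly one such tableau of shape (the Jordan form of) $\caO$ filled with the coordinates of $\chi'(\caO)$, and then proves maximality by checking that no such tableau of strictly smaller special shape exists (with Borho--Jantzen supplying only the implication ``strictly larger ideal $\Rightarrow$ strictly smaller associated variety''). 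This domino argument is uniform in $\caO$ and never distinguishes the even and non-even cases.

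The genuine gap in your proposal is step~(2)--(3) when $d(\caO)$ is not even, which you yourself flag as the main obstacle. The appendix to \cite{bv2} identifies the associated variety of the maximal primitive ideal at a \emph{special unipotent} infinitesimal character $\chi(\caO^\vee)$; by Proposition~\ref{p:sppq-ic}, $\chi'(\caO)$ is of this form precisely when $d(\caO)$ is even, so in the remaining cases the appendix is simply inapplicable and your proposed alternatives (``run the Barbasch--Vogan construction'', ``match Lusztig's symbol / the $a$-invariant'') are never actually carried out. Filling this in would amount to redoing, for these particular integral $\chi'(\caO)$, the same domino/symbol combinatorics that the paper's approach handles in one stroke via \cite{bv:class}. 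I would suggest abandoning the duality detour and doing the direct domino-tableau count instead; it is short and treats all $\caO$ on an equal footing.
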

\begin{proof}
We first claim that there is a unique primitive ideal, say $I$, containing $Z'(\caO)$ 
and associated variety equal to the closure of $\caO$.  One way to see
this is to consult the classification
of primitive ideals in terms of domino tableaux \cite{bv:class}: one checks easily that there is a unique domino tableau with shape
given by the the Jordan form of $\caO$ whose dominos are labeled by the coordinates of $\chi'(\caO)$ such that the labels
weakly decrease across rows and strictly decrease down columns. 

Next we claim that this primitive ideal is maximal in the inclusion partial order on primitive ideals.  (This will imply 
$I = I'(\caO)$, and finish the proof of the proposition.)
If there is another primitive ideal $J \supsetneq I$, then \cite[Korollar
3.4 and Satz 7.1]{borho-jantzen} imply that
the associated variety of $J$ is properly contained in the associated variety of $I$.  So in the classification of primitive
ideals by domino tableau, $J$ is parametrized by a tableau whose shape is special and strictly smaller than $I$ (in the partial order
on tableau) and whose whose dominos are labeled by the coordinates of $\chi'(\caO)$ such that the labels
weakly decrease across rows and strictly decrease down columns.   One quickly checks that no such domino tableaux exist.
So $I$ is indeed maximal, and $I=I'(\caO)$. 

\end{proof}


\bigskip
Recall the Spaltenstein dual orbit $d(\caO)$ for $\SO(2n+1,\C)$ and the infinitesimal character $\chi(d(\caO))$
discussed in the introduction.

\begin{prop}
\label{p:sppq-ic}
Fix a complex nilpotent orbit $\caO$ for $\Sp(2n,\bbC)$ meeting $\frp$ for $\Sp(p,q)$.  Then
\[
\chi(d(\caO)) = \chi'(\caO) \text{ as infinitesimal characters if and only if  $\caO^\vee=d(\caO)$ is even}.
\]
If we write $k$ for the largest part of the Jordan form corresponding  to $\caO$,
 $\caO^\vee$ is even iff every even part less than or equal to $k$ actually appears with nonzero multiplicity
  in the Jordan form of $\caO$.
\end{prop}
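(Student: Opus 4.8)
The plan is to run everything through the transposed partition. Write $\lambda \vdash 2n$ for the Jordan form of $\caO$; by Section~\ref{s:sppq-nilp}, $\caO$ meeting $\frp$ forces ${}^t\lambda = (c_1 \ge \cdots \ge c_k)$ to have all parts even, say $c_i = 2d_i$ with $d_1 \ge \cdots \ge d_k \ge 1$ and $\sum_i d_i = n$; here $k = \lambda_1$ is the largest part of $\caO$. By the recipe for Spaltenstein duality in \cite[Appendix]{bv2} (transpose, add a box to the largest part, then apply the $B$-collapse), the dual orbit $\caO^\vee = d(\caO)$ in $\SO(2n+1,\bbC)$ is the one whose partition $\mu$ is the $B$-collapse of $\mu' := (2d_1+1, 2d_2, 2d_3, \ldots, 2d_k)$.

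I will use two standard facts. (i) A nilpotent orbit of $\SO(2n+1,\bbC)$ is even iff all parts of its partition are odd; equivalently, writing $\tfrac12 h^\vee$ in standard coordinates, the orbit is even iff $\tfrac12 h^\vee$ is integral (the short roots $\pm e_i$ take the values $\pm h^\vee_i$). (ii) $\chi'(\caO)$ is integral by construction. Together these dispose of one half of the first assertion: if $\chi(d(\caO)) = \chi'(\caO)$, then $\tfrac12 h^\vee$ is integral, so $d(\caO)$ is even.

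The technical heart is the explicit $B$-collapse of $\mu'$. I claim $\mu$ is all-odd exactly when $d_m > d_{m+1}$ for every even $m$ with $1 \le m \le k$ (convention $d_{k+1}=0$, so the constraint at $m=k$ is automatic), in which case
\[
\mu = \bigl(2d_1+1,\ 2d_2-1,\ 2d_3+1,\ \ldots,\ 2d_k + (-1)^{k+1}\bigr)
\]
for $k$ odd, and the same string with an extra part $1$ appended for $k$ even. To see this, run the collapse algorithm: starting at the largest even part of $\mu'$ of odd multiplicity and cascading rightward, one box moves from each even-indexed position into the next; the divisibility condition is exactly what keeps every step weakly decreasing and prevents any even value of multiplicity $\ge 2$ from surviving, so the cascade terminates in the displayed all-odd partition $\nu$, which one checks is $\le \mu'$ in dominance with no $B$-partition strictly between (hence $\mu=\nu$). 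Conversely, if $d_m = d_{m+1}$ for some even $m < k$, the same bookkeeping leaves a pair of equal even parts in $\mu$, so $\mu$ is not all-odd. Making this ``failure'' direction airtight is the step I expect to be the main obstacle; the rest is routine.

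Granting the formula for $\mu$, both assertions follow. Second assertion: from $\lambda = {}^t(2d_1,\ldots,2d_k)$ one has $\lambda_j = m$ precisely for $2d_{m+1} < j \le 2d_m$, so the value $m$ occurs as a part of $\lambda$ iff $d_m > d_{m+1}$; thus ``every even part $\le k$ occurs in $\lambda$'' is verbatim the divisibility condition, equivalent by the collapse analysis and fact (i) to $\caO^\vee$ being even. Remaining half of the first assertion: if $d(\caO)$ is even the divisibility condition holds and $\mu$ is as above, so reading $\tfrac12 h^\vee$ off $\mu$ we get the positive coordinates $1,\ldots,d_i$ from each part $2d_i+1$ ($i$ odd), $1,\ldots,d_i-1$ from each part $2d_i-1$ ($i$ even), and $\lfloor k/2\rfloor$ zeros appended to reach length $n$ --- exactly the multiset defining $\chi'(\caO)$ (whose zeros are the ones in the strings $(0,1,\ldots,d_i-1)$ at even $i$). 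Hence $\chi(d(\caO)) = \chi'(\caO)$ as $W$-orbits.
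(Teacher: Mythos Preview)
Your proof is correct and follows the same approach as the paper's: both argue by direct combinatorial verification, but the paper's argument is extremely terse (``it is now a straightforward matter to check\ldots'' and ``one checks directly\ldots''), while you have actually written out those checks via the explicit $B$-collapse of $(2d_1+1,2d_2,\ldots,2d_k)$ and the resulting comparison of multisets. One cosmetic point worth a footnote: you state the recipe for $d(\caO)$ as ``transpose, add a box to the largest part, then $B$-collapse,'' whereas the paper uses ``add a box to the largest part, $B$-collapse, then transpose''; these two procedures are equivalent formulations of the Spaltenstein dual (and in particular agree for all $\caO$ meeting $\frp$), so this does not affect correctness.
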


\begin{proof}
To begin, we recall the the procedure for computing $d(\caO)$.  Let $\nu$ denote the
partition of $2n$ parametrizing $\caO$, and recall that
nilpotent orbits $\frs\fro(2n+1,\bbC)$ are parametrized by partitions of $2n+1$
in which even parts occur with even multiplicity. (Such a partition is called a B-partition.)
To compute $d(\caO)$, first
add 1 to the largest part of $\nu$ and let $\nu^+$ denote the resulting
partition.  This  need not be a B-partition, so  take the B-collapse and call this partition $\nu^B$;
this is the largest B-partition (in the partial order on partitions) less that or equal to $\nu^+$.
 Then $d(\caO)$ is parametrized
by the {\em transpose} of $\nu^B$.  (This is automatically a
B-partition  in this case.)

It is now a straightforward matter to check that the condition on the Jordan form of $\caO$
stated in the proposition is equivalent to $\caO^\vee=d(\caO)$ being even.  So these are only 
possible orbits so that $\chi(d(\caO))$ is integral.  
Using the description of the middle element of Jacobson-Morozov triples in the classical cases
(given, for example, in \cite[Chapter 5]{cm}), one checks directly that
$\chi(d(\caO))$ coincides with $\chi'(\caO)$, as claimed.

\end{proof}

\subsection{Counting unipotent representations}
\label{s:sppq-counting}

\begin{theorem}
\label{t:sppq-counting}
Fix a complex nilpotent orbit $\caO$ meeting $\frp$, and write
\[
\caO  \cap \frp = \{ \caO_K^1, \dots, \caO_K^r\}.
\]
Recall the infinitesimal character $\chi'(\caO)$ and primitive ideal $I'(\caO)$
of Section \ref{s:sppq-ic}.  Write $\Unip'(\caO))$ for the
set of irreducible Harish-Chandra modules for $\Sp(p,q)$ annihilated by $I'(\caO)$.  
Then there is a bijection
\begin{align*}
\{ \caO_K^1, \dots, \caO_K^r\} &\lra \Unip'(\caO) \\
\caO_K^i &\lra \pi'(\caO_K^i)
\end{align*}
such that $\caO_K^i$ is the unique dense $K$ orbit in the associated variety of $\pi'(\caO_K^i)$.
\end{theorem}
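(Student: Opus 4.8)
The plan is to prove the theorem by establishing a chain of equalities of cardinalities together with a natural assignment, and then verifying the assignment is the claimed bijection. First I would recall that, by the theory of associated varieties (Section \ref{s:av}) and the Borho--Jantzen results cited in Proposition \ref{p:av-unip}, any irreducible $\pi$ annihilated by $I'(\caO)$ has $\AV(\Ann \pi) = \ol{\caO}$, hence the associated variety of $\pi$ itself is a union of closures of $K$-orbits $\caO_K^i$ with $G \cdot \caO_K^i = \caO$, i.e.~a union of closures of members of $\caO \cap \frp$. The first key step is to show that $\AV(\pi)$ is in fact \emph{irreducible} for every $\pi \in \Unip'(\caO)$, so that a well-defined map $\pi \mapsto \caO_K(\pi)$ to $\caO \cap \frp$ is obtained by taking the dense orbit; this is where the arithmetic of $\chi'(\caO)$ enters — because $\chi'(\caO)$ is chosen so that the multiplicity polynomials attached to distinct $\caO_K^i$ have incompatible degrees or leading behaviour, only one component can be ``seen'' at this infinitesimal character. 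Concretely I would run the argument via coherent continuation: place $\pi$ in a coherent family, use that the associated variety is constant along the family while associated cycle multiplicities vary polynomially (the commented-out Chang/Borho--Brylinski material), and check that at the special value $\chi'(\caO)$ exactly one orbit survives.

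The second key step is the surjectivity and injectivity of $\caO_K^i \mapsto \pi'(\caO_K^i)$. For surjectivity I would exhibit, for each $\caO_K^i \in \caO \cap \frp$, an irreducible Harish-Chandra module with $\AV$ equal to $\ol{\caO_K^i}$ and infinitesimal character $\chi'(\caO)$; the natural candidates are the unipotent $\Aq(\lam)$ modules constructed (for the even case) in Section \ref{s:sppq-aql} together with the real-induction constructions of Section \ref{s:intro2-proof}, whose associated varieties are computed via Theorem \ref{t:aql}(iii)--(iv) and Proposition \ref{p:av-real-ind}. By Proposition \ref{p:av-unip} (or Proposition \ref{p:sppq-pi}) such a module is automatically annihilated by $I'(\caO)$, hence lies in $\Unip'(\caO)$ and maps to $\caO_K^i$. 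For injectivity I would use that $I'(\caO)$ is a \emph{single} primitive ideal (Proposition \ref{p:sppq-pi}) and invoke the fact — going back to the classification of irreducibles with fixed annihilator, e.g.~via the $\tau$-invariant/cell theory or Vogan's multiplicity-one results for Harish-Chandra cells of classical groups — that the number of irreducibles with a fixed special-unipotent annihilator and a fixed dense $K$-orbit in their associated variety is exactly one; combined with Step 1 this forces $|\Unip'(\caO)| = |\caO \cap \frp| = r$ and the map to be bijective.

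The main obstacle I expect is Step 1 together with the injectivity half of Step 2: showing that \emph{no} irreducible in $\Unip'(\caO)$ has reducible associated variety, and that the assignment to the dense orbit is one-to-one. Both amount to a ``multiplicity-one'' statement for the set of irreducibles with annihilator $I'(\caO)$, refined by associated variety. For $\Sp(p,q)$ this should be accessible because the relevant cells are of a combinatorial type controlled by signed/domino tableaux (as already used in Proposition \ref{p:sppq-pi}), so the counting reduces to a tableau bijection; but pinning down the precise correspondence between the $r$ signed tableaux parametrizing $\caO \cap \frp$ and the $r$ irreducibles, and checking the dense-orbit compatibility, is the delicate combinatorial heart of the argument. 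A secondary technical point is ensuring the $\Aq(\lam)$ and induced modules produced in Step 2 are genuinely distinct and exhaust the list rather than overlapping — this I would handle by comparing their associated varieties (which are distinct by construction) and appealing to the cardinality count from Step 1.
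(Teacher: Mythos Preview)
Your plan has two genuine gaps.  First, the surjectivity argument in Step~2 is circular: the $\Aql$ constructions of Section~\ref{s:sppq-aql} and the real-induction constructions of Section~\ref{s:intro2-proof} both \emph{use} Theorem~\ref{t:sppq-counting} (via Remark~\ref{r:sppq-counting}) to identify the modules they build as the $\pi'(\caO_K^i)$.  In particular, Proposition~\ref{p:sppq-unip-aql} invokes the uniqueness statement of Remark~\ref{r:sppq-counting}, and Lemma~\ref{l:sppq-main}(iii) invokes Theorem~\ref{t:sppq-counting} to recognise $I(1/2)$ as a multiple of $\pi'(\caO_K^\big)$.  So those sections cannot serve as the existence input here; the logical order of the paper is that Theorem~\ref{t:sppq-counting} comes first and is proved by entirely different means.

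Second, your Step~1 argument for irreducibility of $\AV(\pi)$ does not work as stated.  All $K$-orbits $\caO_K^i$ in $\caO\cap\frp$ have the same dimension (namely $\tfrac12\dim\caO$), so the Chang multiplicity polynomials attached to the various components all have the \emph{same} degree; there is no ``incompatible degrees or leading behaviour'' phenomenon that forces all but one multiplicity to vanish at $\chi'(\caO)$.  Moreover, since associated variety is constant along a coherent family, placing $\pi$ in a family and translating cannot make components of $\AV(\pi)$ disappear unless the whole module dies.  Irreducibility of $\AV$ for $\Sp(p,q)$ at integral infinitesimal character is a genuine structural fact that the paper imports from \cite[Theorem~5.24]{t:ltc}; it is not an elementary consequence of coherent continuation.

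The paper's actual proof runs in the opposite direction from yours and is purely cell-theoretic.  By \cite[Theorem~5.24]{t:ltc}, every irreducible with integral infinitesimal character has irreducible associated variety, and moreover the set $\caC(\caO_K)$ of irreducibles at \emph{trivial} infinitesimal character with $\AV=\ol{\caO_K}$ is a single Harish-Chandra cell.  One then translates $\caC(\caO_K)$ to the wall $\chi'(\caO)$; the survivors are exactly the elements of $\Unip'(\caO)$ with associated variety $\ol{\caO_K}$.  Finally, McGovern's computation of the cell module \cite[Theorem~6]{monty} shows that exactly one element of $\caC(\caO_K)$ survives this translation.  Both existence and uniqueness thus come from the cell structure, with no explicit construction of modules and no appeal to the later sections.
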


\begin{proof}
Fix $\caO_K$ and set $\caO = G \cdot \caO_K$.
By \cite[Theorem 5.24]{t:ltc}, the associated variety of any irreducible Harish-Chandra module for $\Sp(p,q)$ with integral infinitesimal character 
is the closure of single nilpotent $K$ orbit on $\frp$.  Moreover \cite[Theorem 5.24]{t:ltc} implies  the set of 
irreducible Harish-Chandra modules, say $\caC(\caO_K)$, with trivial infinitesimal character and  associated variety equal to the
closure of $\caO_K$ is a cell of Harish-Chandra modules.  If we translate the elements of $\caC(\caO_K)$ from trivial infinitesimal
character to $\chi'(\caO_K)$ (without crossing any walls), most elements will die; the representations that do not are irreducible,
have infinitesimal character $\chi'(\caO_K)$,
have associated variety equal to the closure of $\caO_K$, and thus (by Proposition \ref{p:sppq-pi}) annihilator equal to $I'(\caO_K)$.  Finally
 the module structure of $\caC(\caO_K)$ computed in 
 \cite[Theorem 6]{monty} implies that there is exactly one such module.  It is the unique irreducible module
 with associated variety equal to the closure of $\caO_K$ and annihilator $I'(\caO_K)$, namely $\pi'(\caO_K)$.   The theorem follows.
 
 \end{proof}

\begin{remark}
\label{r:sppq-counting}
The proof shows that  $\pi'(\caO_K)$ has another characterization: it is the unique Harish-Chandra module
with infinitesimal character $\chi'(\caO_K)$ and associated variety equal to the closure of $\caO_K$.
\end{remark}

\begin{remark}
\label{r:sppq-mult}
In fact, in Theorem \ref{t:sppq-mult} we will see that the multiplicity of 
$\caO_K$ in the associated cycle of $\pi'(\caO_K)$ is always one. 
\end{remark}

\subsection{Unipotent $A_\frq(\lam)$ modules: all even parts of $\caO_K$ have multiplicity at most 2.}
\label{s:sppq-aql}
As a consequence of the general results of Section \ref{s:coh-ind}, we can describe some unipotent representations for $\Sp(p,q)$ as $\Aql$ modules.  

Recall that the $K$ conjugacy of $\theta$-stable parabolics for $\Sp(p,q)$ are parametrized by 
sequences $(p_0,q_0),(p_1,q_1),\dots, (p_r,q_r)$ with $p = \sum_i p_i$ and
$q= \sum_i q_i$.  If $\frq = \frl \oplus \fru$ is the corresponding parabolic, then the normalizer in $\Sp(p,q)$ of $\frl$ is
\[
\Sp(p_0,q_0) \times \U(p_1,q_1) \times \cdots \times \U(p_r,q_r).
\]
Fix a nilpotent $K$ orbit $\caO_K$ corresponding to a signed tableau $S$ (Section \ref{s:sppq-nilp}) in which all even parts have multiplicity at most 2.   We describe a corresponding
$\theta$-stable parabolic $\frq(\caO_K)$.

First modify $S$ to obtain a new signed tableau $S_1$ as
follows. Let the odd rows of $S_1$ equal the odd rows of $S$. Change each pair of even rows $(m_+,m_-)$ in $S$ to a pair of odd rows $((m+1)_+, (m-1)_-)$ in $S_1$.  Then $S_1$ no longer is of the form to parametrize a
nilpotent orbit in $\Sp(p,q)$,  but $S_1$ is now a tableau in which only odd rows 
appear\footnote{In the definition of $S_1$, for each pair of even rows, we could have made a different choice by changing the pair $(m_+,m_-)$ to  $((m+1)_-, (m-1)_+)$}.  All of
the results below remain valid for any of these possible choices defining $S_1$.  See the remark after Proposition \ref{p:sppq-unip-aql}..

Write $2r+1$ for the largest part of $S_1$.   Set
\begin{align*}
p_r &=  \text{the number of rows of length $2r+1$ beginning with plus};\\
q_r &=  \text{the number of rows of length $2r+1$ beginning with minus};\\
p_{r-1} &=  q_r + \text{the number of rows of length $2r-1$ beginning with minus};\\
q_{r-1} &=  p_r + \text{the number of rows of length $2r-1$ beginning with plus};\\
&\vdots \\
p_0 &= q_1 +  \text{the number of rows of length $1$ beginning with $(-1)^r$};\\
q_0 &= p_1 +  \text{the number of rows of length $1$ beginning with $-(-1)^r$}.
\end{align*}
In other words, $(p_r,q_r)$ records the signs on the ends of the longest rows of $S_1$ or, equivalently (since the rows are odd), the signs at the
beginning of the longest rows of $S_1$.   To obtain $(p_{r-1},q_{r-1})$, remove these the signs $(p_r,q_r)$ from both the beginning and ends of the
longest rows of $S_1$; then $(p_{r-1},q_{r-1})$ records the signs on the ends (equivalently, beginning) of the longest rows of the resulting tableau.  And so on.

Next let
\begin{equation}
\label{e:sppq-q}
\frq(\caO_K) = \frl(\caO_K) \oplus \fru(\caO_K)
\end{equation}
be the parabolic parametrized by the sequence $(p_0,q_0),(p_1,q_1),\dots, (p_r,q_r)$.

Here is the reason for these definitions.

\begin{prop}
\label{p:sppq-birat}
With notation as in the previous paragraph, the dense $K$ orbit
in 
\[
K\cdot (\fru(\caO_K) \cap \frp)
\]
is $\caO_K$.  Moreover,
\[
\dim
\left 
(
G\cdot (\fru(\caO_K) \cap \frp) 
\right )
= 
\dim
\left (
G \cdot (\fru(\caO_K))
\right
).
\]
\end{prop}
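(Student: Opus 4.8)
The plan is to pass to the linear-algebra model of $\theta$-stable parabolics, exhibit an explicit element of $\caO_K$ inside $\fru(\caO_K)\cap\frp$, and then compute a Richardson orbit. Write $\frq=\frq(\caO_K)=\frl\oplus\fru$ and $\caO=G\cdot\caO_K$. Realize $G=\Sp(V)$ for a $2n$-dimensional symplectic space $(V,\omega)$, with $\theta$ the involution whose $\pm1$-eigenspaces $V_\pm$ are symplectic subspaces of dimensions $2p,2q$; then $\frk=\mathfrak{sp}(V_+)\oplus\mathfrak{sp}(V_-)$, and a nilpotent $X\in\frp$ amounts to a pair of maps $V_+\to V_-$, $V_-\to V_+$ adjoint under $\omega$. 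The parabolic $\frq$ corresponds to a $\theta$-stable $\bbZ$-grading $V=\bigoplus_{a=-r}^{r}V^{(a)}$, with $\omega$ pairing $V^{(a)}$ against $V^{(-a)}$ and restricting nondegenerately to $V^{(0)}$; the dimensions of the $\theta$-eigenspaces $V^{(a)}_\pm$ are encoded by the sequence $(p_0,q_0),\dots,(p_r,q_r)$ (with $V^{(0)}$ the $\mathfrak{sp}$-factor of $\frl$), and $\fru=\{X\in\frg:XV^{(a)}\subseteq\bigoplus_{b>a}V^{(b)}\}$, with $\fru\cap\frp$ consisting of such $X$ that also interchange $V_+$ and $V_-$. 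Since $K$ is connected and $\fru\cap\frp$ is an affine space, $K\cdot(\fru\cap\frp)$ is irreducible; so the first assertion is equivalent to the claim that a generic element of $\fru\cap\frp$ has $K$-orbit $\caO_K$.

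To prove that, I would write down an explicit nilpotent $X_0\in\fru\cap\frp$ and compute its signed Young diagram. Fix a basis of $V$ adapted to the grading and to $\theta$, and group its vectors into Jordan chains for $X_0$: a chain of length $2\ell+1$ occupies one vector in each of $V^{(-\ell)},\dots,V^{(\ell)}$, with $X_0$ sending each to the next and the $\theta$-signs alternating along the chain — automatic from $X_0\in\frp$ — and the chains paired up or self-paired compatibly with $\omega$ pairing $V^{(a)}$ with $V^{(-a)}$. The recipe defining $(p_j,q_j)$ from $S_1$ is built precisely so that (a) the available odd chains, with their multiplicities and boundary signs, match the rows of the odd-row tableau $S_1$ — the alternating $p_j\leftrightarrow q_j$ bookkeeping in the recipe being exactly the effect of the duality $V^{(a)}\leftrightarrow V^{(-a)}$ on the sign labels — and (b) for each even row-pair $(m_+,m_-)$ of $S$, which appears in $S_1$ as the odd pair $((m{+}1)_+,(m{-}1)_-)$, the two corresponding chains can be perturbed within $\fru\cap\frp$ into two chains of common length $m$ with opposite boundary signs; i.e. $\mathrm{shape}(S)$ is the type-$C$ collapse of $\mathrm{shape}(S_1)$, realized carrying the signs of $S$. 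Reading off the ranks of the powers $X_0^k$ and the signatures of the forms $(u,v)\mapsto\omega(u,X_0^kv)$ on the appropriate subquotients identifies the signed Young diagram of $K\cdot X_0$ with $S$, so $K\cdot X_0=\caO_K$. It remains to check that $X_0$ is generic in $\fru\cap\frp$ — equivalently that $\dim_\C\caO_K=\dim_\C\overline{K\cdot(\fru\cap\frp)}$, equivalently that $[\,\frq\cap\frk,\,X_0\,]=\fru\cap\frp$ — a dimension count carried out from $\mathrm{shape}(S)$ and the $(p_j,q_j)$.

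Granting the first assertion, $\overline{K\cdot(\fru\cap\frp)}=\overline{\caO_K}$, hence $\overline{G\cdot(\fru\cap\frp)}=\overline{G\cdot\caO_K}=\overline{\caO}$ and $\dim G\cdot(\fru\cap\frp)=\dim\caO$; also $\caO\subseteq\overline{G\cdot\fru}$ since $\fru\cap\frp\subseteq\fru$. Thus the second assertion is equivalent to saying that $\caO$ is exactly the Richardson orbit of the complex parabolic $\frq$ of $\Sp(2n,\C)$, whose Levi is $\Sp(2(p_0{+}q_0),\C)\times\GL(p_1{+}q_1,\C)\times\cdots\times\GL(p_r{+}q_r,\C)$. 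I would verify this from the standard combinatorial description of Richardson orbits in type $C$: the Jordan type of that Richardson orbit is the type-$C$ collapse of the \emph{threading} partition determined by the graded dimensions, which by the same computation as in step (b) above is $\mathrm{shape}(S)$. This also confirms that the two halves of the argument are consistent — both reflect the point that the recipe is engineered so that the generic element of $\fru\cap\frp$ is simultaneously a Richardson element for $\frq$.

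The crux — and the main obstacle — is the combinatorics of the second paragraph: threading the Jordan chains so that they lie in $\fru\cap\frp$, respect the $\omega$-pairing of $V^{(a)}$ with $V^{(-a)}$, and reproduce the \emph{signed} tableau $S$ and not merely its shape, especially the passage from the odd-row tableau $S_1$ back to the even rows of $S$ via the type-$C$ collapse in step (b), together with the accompanying dimension count identifying $K\cdot X_0$ as the dense orbit and the matching Richardson computation of the third paragraph. The remaining ingredients — irreducibility of $K\cdot(\fru\cap\frp)$, the Richardson dimension formula, and the Spaltenstein/collapse combinatorics — are standard.
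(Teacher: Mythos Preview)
Your proposal is correct in outline and would succeed, but it takes a different route from the paper.  The paper's proof is a sketch that cites two established combinatorial results and then simply checks that the recipe for $(p_i,q_i)$ has been engineered so that both give the right output: for the first assertion it invokes \cite[Proposition 5.1]{t:rich}, which computes the dense $K$ orbit in $K\cdot(\fru\cap\frp)$ by successively adding signed columns $c(p_r,q_r),\dots,c(p_0,q_0),\dots,c(p_r,q_r)$ and then type-$C$ collapsing; for the second it invokes the Lusztig--Spaltenstein formula for the complex Richardson orbit \cite[Theorem 7.3.3]{cm} and observes that the unsigned version of the same column-adding calculation gives the same shape.  In both cases the answer is $S_1$, whose collapse is $S$, and the proof is complete.

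You instead propose to bypass these citations and do the linear algebra directly: build an explicit $X_0\in\fru\cap\frp$ by threading Jordan chains through the graded pieces $V^{(a)}$, read off its signed tableau, and verify genericity via $[\frq\cap\frk,X_0]=\fru\cap\frp$.  This is essentially what underlies the cited result from \cite{t:rich}, so you are reproving the special case you need rather than quoting it.  The advantage of your approach is that it is self-contained and makes the role of the collapse $S_1\rightsquigarrow S$ completely explicit; the advantage of the paper's approach is brevity.  One small remark: your genericity condition $[\frq\cap\frk,X_0]=\fru\cap\frp$ is actually the stronger birationality condition (that $(Q\cap K)\cdot X_0$ is dense in $\fru\cap\frp$), which is more than is needed for the first assertion alone but is exactly what is used later in the multiplicity statement of Theorem~\ref{t:aql}(iv), so it is worth establishing anyway.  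The combinatorial core you identify as the ``main obstacle'' --- matching the threaded chains to $S_1$ and realizing the even-row collapse --- is genuine, and is precisely the content packaged into the cited results.
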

\begin{proof}[Sketch]
The first assertion follows from the calculation of $K\cdot(\fru \cap \frp)$ (for general $\fru$) given in \cite[Proposition 5.1]{t:rich}.  Roughly speaking, the
calculation goes as follows.  Let $c(p_i,q_i)$ denote a single signed column with $p_i$ plusses and $q_i$ minuses.  The
signed tableau parametrizing the dense orbit $K$ orbit in $K\cdot(\fru(\caO_K) \cap \frp)$ is obtained by successively adding consecutive signed columns 
\[
c(p_r,q_r), c(p_{r-1},q_{r-1}), \cdots, c(p_1,q_1), c(p_0,q_0), c(p_1,q_1), \cdots, c(p_{r-1},q_{r-1}), c(p_{r},q_{r})
\]
and then collapsing to obtain a signed partition satisfying the conditions of Section \ref{s:sppq-nilp}.  The definitions of $(p_i,q_i)$ given above
are arranged so that the result of adding the consecutive columns together is the tableau $S_1$.  At the same same time, $S_1$ is defined so that its collapse
is $S$, the tableau parametrizing $\caO_K$.  So everything has been engineered so that $\caO_K$ is dense in $K\cdot (\fru(\caO_K) \cap \frp)$, as claimed.

Meanwhile, the general calculation
of the induced orbit dense in $G\cdot \fru$ (for general $\fru$) goes back to Lusztig and is given in \cite[Theorem
7.3.3]{cm}.  So to prove the second assertion of the proposition, we simply have to compare the two calculations.

In more detail, let $c(p_i,q_i)$ denote a single column with $p_i+q_i$ boxes.  
The
diagram parametrizing the dense $G$ orbit in $G\cdot(\fru(\caO_K))$  is obtained by successively adding consecutive columns 
\[
c(p_r+q_r), \cdots, c(p_1+q_1), c(p_0+q_0), c(p_1+q_1), \cdots,c(p_{r}+q_{r})
\]
and then collapsing to obtain a diagram in which every odd part occurs an even number of times.  The definitions of $(p_i+q_i)$ given above
are arranged so that the result of the adding the consecutive columns together is the shape of the tableau $S_1$, which collapses to a diagram
with the shape of $S$.  

Comparing the two calculations, we see that the Jordan form
in both cases is the same.  So the second assertion of the proposition follows.
\end{proof}

\begin{lemma}
\label{l:sppq-lam}
There is a one dimensional $(\frl(O_K), L(\caO_K) \cap K)$ module $\bbC_{\lam(\caO_K)}$ in the weakly fair range
for $\frq(\caO_K)$ such that
$\lam(\caO_K) + \rho$ represents the same infinitesimal character as $\chi'(\caO)$.
\end{lemma}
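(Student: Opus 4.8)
The plan is to produce $\lam(\caO_K)$ by an explicit blockwise formula, read the infinitesimal character of $A_{\frq(\caO_K)}\bigl(\lam(\caO_K)\bigr)$ off Theorem \ref{t:aql}(i), and check the weakly fair inequalities through a direct computation of $\rho\bigl(\fru(\caO_K)\bigr)$.

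First I would fix coordinates on $\frh^*$ so that the $i$-th $\frgl(m_i)$ factor of the complexified Levi $\frl(\caO_K)=\frsp(2m_0)\oplus\frgl(m_1)\oplus\cdots\oplus\frgl(m_r)$ (here $m_i=p_i+q_i$) occupies a block $B_i$ of $m_i$ consecutive coordinates, $\frsp(2m_0)$ occupies the remaining block $B_0$, $\rho$ is $\Delta^+$-dominant, and $\Delta\bigl(\fru(\caO_K)\bigr)\subset\Delta^+$. A one-dimensional $\bigl(\frl(\caO_K),L(\caO_K)\cap K\bigr)$ module is necessarily trivial on $\frsp(2m_0)$ and a scalar on each $\frgl(m_i)$, so $\lam=(a_1\mathbf 1_{m_1},\dots,a_r\mathbf 1_{m_r},0\,\mathbf 1_{m_0})$ for constants $a_i$, and such a $\lam$ integrates to $L(\caO_K)\cap K$ exactly when each $a_i\in\Z$. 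By Theorem \ref{t:aql}(i) the infinitesimal character of $A_{\frq(\caO_K)}(\lam)$ is $\lam+\rho$; its restriction to $B_0$ is $(m_0,m_0-1,\dots,1)$ and its restriction to $B_i$ ($1\le i\le r$) is the string of $m_i$ consecutive integers centered at the constant $t_i:=a_i+s_i$, where $s_i$ denotes the common value of $\rho\bigl(\fru(\caO_K)\bigr)$ on $B_i$.

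Next I would carry out the combinatorial comparison. By Proposition \ref{p:sppq-birat}, $\caO=G\cdot\fru(\caO_K)$ is the Richardson orbit of $\frq(\caO_K)$, and the column-addition recipe recalled in its proof -- applied to the tableau $S_1$, whose columns are built from the $(p_i,q_i)$ -- computes the transposed Jordan partition $c_1\ge c_2\ge\cdots$ of $\caO$ (with each $c_k=2d_k$), so that, feeding this into the definition in Section \ref{s:sppq-ic}, $\chi'(\caO)$ is an explicit concatenation of the strings $(1,\dots,d_k)$ ($k$ odd) and $(0,\dots,d_k-1)$ ($k$ even). Comparing with the previous paragraph, one checks that $B_0$ accounts for the leading strings and that for exactly one value of each scalar $a_i$ the string on $B_i$ -- after permuting coordinates and negating the negative ones, both effected by $W$ -- matches the next run of strings of $\chi'(\caO)$; since $\rho$ and $\chi'(\caO)$ are integral this forces $a_i\in\Z$, so $\bbC_{\lam(\caO_K)}$ is indeed a $\bigl(\frl(\caO_K),L(\caO_K)\cap K\bigr)$ module, and by construction $\lam(\caO_K)+\rho\in W\cdot\chi'(\caO)$.

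Finally, a direct computation shows that $\rho\bigl(\fru(\caO_K)\bigr)$ is constant on each $B_i$ and that the weakly fair condition $\langle\lam+\rho(\fru),\alpha^\vee\rangle\ge 0$ for all $\alpha\in\Delta\bigl(\fru(\caO_K)\bigr)$ collapses to the single chain $t_1\ge t_2\ge\cdots\ge t_r\ge 0$ in the block quantities $t_i=a_i+s_i$ above; with the $a_i$ from the previous step one checks this chain from the explicit formulas. I expect the main obstacle to be exactly this bookkeeping: reconciling the column-addition description of the Jordan form of $\caO$ with the ad hoc definition of $\chi'(\caO)$, keeping track of which coordinates of $\lam+\rho$ must be negated to reassemble $\chi'(\caO)$, and then confirming that the $t_i$ produced by the matching are nonnegative and nonincreasing. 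Once the combinatorics is set up correctly, none of this goes beyond elementary manipulation of partitions.
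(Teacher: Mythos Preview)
Your proposal is correct and is essentially the paper's own approach, spelled out in full. The paper's proof of Lemma~\ref{l:sppq-lam} is a single sentence---``This is clear from the explicit description of $\chi'(\caO)$ given in Section~\ref{s:sppq-ic}''---and what you have written is exactly the bookkeeping that sentence suppresses: parametrizing the one-dimensional characters of $\frl(\caO_K)$ by integers $a_i$, reading off $\lam+\rho$ blockwise as consecutive integer strings centered at $t_i=a_i+s_i$, matching these against the column strings $(1,\dots,d_k)$ and $(0,\dots,d_k-1)$ defining $\chi'(\caO)$, and then reducing the weakly fair inequality to the chain $t_i\ge t_{i+1}\ge\cdots\ge 0$ (or its reverse, depending on which block is outermost).
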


\begin{proof}
This is clear from the explicit description of $\chi'(\caO)$ given in Section \ref{s:sppq-ic}.
\end{proof}

\begin{prop}
\label{p:sppq-unip-aql}
Fix a complex nilpotent orbit $\caO$ meeting $\frp$ whose Jordan form has all even parts occuring at most twice.  
In the notation of Theorem \ref{t:sppq-counting}, Equation \eqref{e:sppq-q}, and Lemma \ref{l:sppq-lam},
\[
\pi'(\caO_K) = A_{\frq(\caO_K)}(\lam(\caO_K)).
\]
In particular, $\pi'(\caO_K)$ is unitary and $\caO_K$ occurs with multiplicity one in its associated variety.
\end{prop}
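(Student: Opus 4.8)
The plan is to realize $A := A_{\frq(\caO_K)}(\lam(\caO_K))$ as the module singled out by Theorem~\ref{t:sppq-counting} and Remark~\ref{r:sppq-counting}, namely the unique irreducible Harish-Chandra module for $\Sp(p,q)$ with infinitesimal character $\chi'(\caO)$ whose associated variety is $\ol{\caO_K}$.

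First I would check that the hypotheses of Theorem~\ref{t:aql}(iv) are in force. By Lemma~\ref{l:sppq-lam}, $\lam(\caO_K)$ lies in the weakly fair range for $\frq(\caO_K)$, so Theorem~\ref{t:aql}(iii) already gives that $A$ is unitary with $\AV(A)=K\cdot(\fru(\caO_K)\cap\frp)$; by the first assertion of Proposition~\ref{p:sppq-birat} the dense $K$ orbit there is $\caO_K$, so $\AV(A)=\ol{\caO_K}$. The second assertion of Proposition~\ref{p:sppq-birat} is precisely the equality $\dim G\cdot(\fru(\caO_K)\cap\frp)=\dim G\cdot\fru(\caO_K)$ required in Theorem~\ref{t:aql}(iv). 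Moreover, by Theorem~\ref{t:aql}(i) and Lemma~\ref{l:sppq-lam}, if $A\neq 0$ then $A$ has infinitesimal character $\chi'(\caO)$.

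The heart of the argument is to eliminate the vanishing alternative in Theorem~\ref{t:aql}(iv): one must show there is no $\theta$-stable parabolic $\frq'=\frl'\oplus\fru'\supsetneq\frq(\caO_K)$ with $L'/L(\caO_K)$ compact such that $\chi'(\caO)$ is singular for a root of $\frh$ in $\frl'$. Here I would argue directly from the recipe for $(p_0,q_0),\dots,(p_r,q_r)$ in terms of $S_1$: a strictly larger $\theta$-stable $\frl'$ with compact quotient over $L(\caO_K)$ can only merge $\U$-blocks of one-sided signature (possibly absorbing such blocks into the $\Sp$-factor), and the new roots of $\frh$ in $\frl'$ pair with $\chi'(\caO)$ to give differences of coordinates coming from distinct strings in the concatenation defining $\chi'(\caO)$ in Section~\ref{s:sppq-ic}. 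Using the explicit form of those strings — together with the hypothesis that even parts occur at most twice, which pins down the shape of $S_1$ and hence of the blocks $(p_i,q_i)$ — one verifies that all such pairings are nonzero, so no such $\frq'$ exists. Thus $A\neq 0$, and Theorem~\ref{t:aql}(iv) then gives that $A$ is irreducible. I expect this nonvanishing verification to be the main obstacle; everything else is formal.

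With $A$ an irreducible Harish-Chandra module of infinitesimal character $\chi'(\caO)$ and associated variety $\ol{\caO_K}$, Remark~\ref{r:sppq-counting} (which rests on Theorem~\ref{t:sppq-counting}) forces $A=\pi'(\caO_K)$. Unitarity of $\pi'(\caO_K)$ is then immediate from Theorem~\ref{t:aql}(iii). Finally, Theorem~\ref{t:aql}(iv) shows the multiplicity of $\caO_K$ in the associated cycle $\AC(A)$ divides the order of the component group of the centralizer in $K$ of a point of $\caO_K$, and for the orbits at hand the refined count there yields multiplicity one (this is also subsumed by Theorem~\ref{t:sppq-mult}, cf.~Remark~\ref{r:sppq-mult}), completing the proof.
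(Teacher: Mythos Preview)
Your approach is essentially the paper's: use Theorem~\ref{t:aql}(i),(iii) together with Proposition~\ref{p:sppq-birat} and Lemma~\ref{l:sppq-lam} to see that $A_{\frq(\caO_K)}(\lam(\caO_K))$ has infinitesimal character $\chi'(\caO)$ and associated variety $\ol{\caO_K}$, then invoke the uniqueness in Remark~\ref{r:sppq-counting}. Two small points of divergence are worth noting. First, the paper does not carry out the nonvanishing check in Theorem~\ref{t:aql}(iv) that you flag as ``the main obstacle''; it simply applies Remark~\ref{r:sppq-counting} directly, so your extra care there is fine but not strictly what the paper does. Second, for multiplicity one the paper's argument is cleaner than yours: it observes that for $\Sp(p,q)$ the component group $A_K(\xi)$ of the centralizer in $K$ of any nilpotent $\xi\in\frp$ is \emph{trivial} (a direct calculation in the spirit of \cite[Chapter 6]{cm}), so Theorem~\ref{t:aql}(iv) immediately gives multiplicity one. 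Your appeal to Theorem~\ref{t:sppq-mult} is circular, since that theorem's proof relies on the present proposition; you should instead state the component-group triviality explicitly.
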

\begin{proof}
By Theorem \ref{t:aql}(iii), $A_{\frq(\caO_K)}(\lam(\caO_K))$ has associated variety $K\cdot(\fru(\caO_K) \cap \frp)$; by Proposition \ref{p:sppq-birat},
this contains the dense $K$ orbit $\caO_K$.  By Theorem \ref{t:aql}(i), $A_{\frq(\caO_K)}(\lam(\caO_K))$ has infinitesimal character represented by  $\lam(\caO_K) +\rho$;
by Lemma \ref{l:sppq-lam}, this is the same as the infinitesimal character represented by $\chi'(\caO_K)$.  
The characterization of $\pi'(\caO_K)$ in Remark \ref{r:sppq-counting} now implies that $\pi'(\caO_K) = A_{\frq(\caO_K)}(\lam(\caO_K))$.  The unitarity is
assertion is given by Theorem \ref{t:aql}

Finally, for the multiplicity statement, the component group $A_K(\xi)$ of the centralizer in $K$ of a nilpotent element $\xi \in \frp$ is always trivial; this follows from
direct calculation, along the lines of the centralizer calculations in \cite[Chapter 6]{cm}.  By Theorem \ref{t:aql}(iv), the multiplicity one assertion follows.
\end{proof}

\begin{remark}
\label{r:sppq-unip-aql-other}
Proposition \ref{p:sppq-unip-aql} does not cover all cases in which $\pi(\caO_K)$ is
a weakly fair $\Aql$ module.  (For example, if $G=\Sp(2,2)$ and $\caO_K$ is the unique orbit with Jordan form $2^4$, then the proof of the proposition
also shows that $\pi'(\caO_K)$ is of the form $\Aql$ with $L = \U(2,2)$.)  However
the propositions cover enough cases to allow general unitarity argument in Section
\ref{s:intro2-proof} to proceed,
\end{remark}

\begin{remark}
\label{r:sppq-unip-aql}
As remarked above, we could have made different choices for the tableau $S_1$.  These different choices could have led to different
parabolics $\frq(\caO_K)$, and ostensibly different $A_{\frq(\caO_K)}(\lam(\caO_K))$ modules.  Nonetheless the proof of Proposition \ref{p:sppq-unip-aql} shows that
different choices lead to the {\em same} module.
\end{remark}

\subsection{Proof of Theorem \ref{t:intro2}.}
\label{s:intro2-proof}
For certain kinds of orbits, we have proved that $\pi'(\caO_K)$ is unitary in Section \ref{s:sppq-aql} using cohomological
parabolic induction.  We now formulate an inductive argument using real parabolic induction to reduce the general case to this case
based on the following result on the level of orbits.  

\begin{lemma}
\label{l:sppq-real-ind-orbits}
Suppose $G_\bbR = \Sp(p',q')$ and fix integers $(p,q)$ and $k$ such that $p+k=p'$ and $q+k=q'$.  Fix a
parabolic subgroup $P_\bbR = L_\bbR N_\bbR$ with 
\[
L_\R \simeq \Sp(p,q) \times \GL(k,\bbH).
\]
Fix a real nilpotent orbit $\caO_\bbR$ for $\Sp(p,q)$ parametrized by a signature $(p,q)$ signed tableau $S$ of the kind 
described in Section \ref{s:sppq-nilp}.  Let $S'_1, \dots,S'_r$ obtained from $S$ by increasing $2k$ of the largest
parts of any representative of $S$ (for the equivalence of interchanging rows of equal length)
by two in any way possible subject to the conditions in Section \ref{s:sppq-nilp}; see Example \ref{e:sppq-real-ind-orbits}.

Write $\caO'_{\bbR,1}, \dots \caO'_{\bbR,r}$ for the corresponding nilpotent orbits.  Then
\[
G_\bbR \cdot (\caO_\bbR + \frn_\bbR) = \bigcup_{i=1}^r \ol{\caO'_{\bbR,i}}
\]
\end{lemma}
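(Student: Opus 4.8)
The plan is to compute both sides of the claimed identity explicitly in terms of signed tableaux and partitions, reducing everything to a purely combinatorial comparison. First I would recall the description of the real orbit $\caO_\bbR + \frn_\bbR$ for $\GL(k,\bbH)$-type Levi factors. Since $\GL(k,\bbH)$ acting on its nilradical contributes a principal nilpotent for $\GL(k,\bbH)$ (i.e. a single Jordan block of "size $k$" in the quaternionic sense, which translates to adding $2k$ to the real picture in a controlled way), the induced real orbit $G_\bbR \cdot (\caO_\bbR + \frn_\bbR)$ should be computed via the Sekiguchi correspondence together with Proposition \ref{p:av-real-ind}: translate $\caO_\bbR$ to the corresponding $K$-orbit $\caO_{L\cap K}$ on $\frl \cap \frp$, form $K \cdot (\caO_{L \cap K} + (\fru \cap \frp))$ for the relevant real parabolic (here $\fru$ is the nilradical of the complexification), decompose into irreducible components, and transport back. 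The key structural input is that real induction from a $\GL(k,\bbH)$ factor, at the level of signed tableaux, corresponds precisely to adding $2k$ boxes onto the longest rows — each quaternionic Jordan block of the principal nilpotent in $\mathfrak{gl}(k,\bbH)$ forces the extension of the two longest available rows by length compatible with the sign-alternation and column-even constraints of Section \ref{s:sppq-nilp}.

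The main steps, in order: (1) identify the complex induced orbit $G \cdot (\fru \cap \frp)$ on the $\Sp(2n,\bbC)$ side using Lusztig's induction formula as stated in \cite[Theorem 7.3.3]{cm}, and check that its Jordan form is obtained from that of $\caO$ by the "add $2k$ to the largest parts" recipe followed by the appropriate collapse; (2) show that the equidimensionality hypothesis needed for Proposition \ref{p:av-real-ind} holds here, i.e. that $G_\bbR \cdot (\caO_\bbR + \frn_\bbR)$ is a union of orbits all of the same dimension equal to $\dim \caO_\bbR + 2 \dim \frn_\bbR$ (this follows because induction from a real Levi of this type is birational onto its image in each component, a consequence of the $\GL(k,\bbH)$ nilradical calculation); (3) enumerate the signed components: each way of distributing the $2k$ added boxes among the longest rows, subject to the sign-alternation and the two restrictions (i) and (ii) of Section \ref{s:sppq-nilp}, produces exactly one $K$-orbit component, and these are in bijection with the tableaux $S'_1,\dots,S'_r$ described in the statement; (4) invoke the Sekiguchi correspondence (Section \ref{s:nilp}) to pass from $K$-orbits on $\frp$ back to $G_\bbR$-orbits on $\frg_\bbR$, yielding the $\caO'_{\bbR,i}$ and the asserted equality.

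I expect the main obstacle to be step (3): verifying that the combinatorial bookkeeping of "which rows can legally be extended by two, in what signed pattern" matches exactly the component structure coming out of Proposition \ref{p:av-real-ind}. The subtlety is that adding $2k$ boxes naively can violate the constraint that even parts beginning with $+$ and $-$ occur in equal numbers (restriction (ii)) or that odd parts of a fixed sign occur with even multiplicity (restriction (i)); so one must argue that the signs on the newly extended rows are forced (or constrained to a short list of choices) precisely so that the resulting tableau is legal, and that after the collapse in Proposition \ref{p:av-real-ind} no two distinct legal choices collapse to the same orbit. This is the kind of "engineered so that everything matches" verification that appeared already in the proof of Proposition \ref{p:sppq-birat}, and I would handle it by the same column-adding bookkeeping: write $\caO_\bbR + \frn_\bbR$ in terms of adding the signed column $c(k,k)$ (from the principal nilpotent of $\GL(k,\bbH)$, appearing $2k$ times as single columns of height... ) to $S$ repeatedly, then read off the components of the collapse. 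An illustrative instance is recorded in Example \ref{e:sppq-real-ind-orbits}, which should make the pattern transparent and which I would lean on to avoid writing out the general collapse in full.
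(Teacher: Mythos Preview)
The paper gives no proof at all: it simply says ``This is an unenlightening exercise with the classification of nilpotent orbits that we omit.''  So there is no detailed argument to compare against, only the implicit indication that one should compute directly with the signed-tableau classification of real nilpotent orbits for $\Sp(p,q)$ (as in \cite[Chapter 9]{cm}), writing down a generic element of $\caO_\bbR + \frn_\bbR$ and reading off its Jordan and signature data.

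Your outline is far more elaborate than this, and parts of it are misdirected.  First, invoking Proposition~\ref{p:av-real-ind} is backwards: that proposition takes the set $G_\bbR \cdot (\caO_{\bbR} + \frn_\bbR)$ as \emph{input} and outputs the associated variety of an induced representation via Sekiguchi.  It does not help you compute $G_\bbR \cdot (\caO_{\bbR} + \frn_\bbR)$ itself; the present lemma is precisely what one feeds into that proposition later (see the proof of Lemma~\ref{l:sppq-main}).  Second, you write ``form $K \cdot (\caO_{L\cap K} + (\fru \cap \frp))$'' for $\fru$ the nilradical of the complexified real parabolic, but $P_\bbR$ here is a \emph{real} parabolic, not a $\theta$-stable one, so $\fru$ is not $\theta$-stable and the Richardson-type calculus of $K\cdot(\fru\cap\frp)$ used in Proposition~\ref{p:sppq-birat} does not apply in the same way.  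You are conflating the two kinds of parabolic induction.

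Of your numbered steps, (1) is correct and useful (Lusztig's formula tells you the complex shape of the induced orbit), and (3)--(4) are essentially the heart of the matter: enumerate which real forms of that complex orbit actually occur.  But to carry out (3) you still have to do the direct computation with real matrices or with the explicit signature bookkeeping for $\Sp(p,q)$---there is no shortcut through Sekiguchi or through Proposition~\ref{p:av-real-ind}.  That direct computation is exactly the ``unenlightening exercise'' the paper declines to write out; your step (2) on equidimensionality is already contained in the statement of Proposition~\ref{p:av-real-ind} and need not be reproved.
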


\begin{proof}
This is an unenlightening exercise with the classification of nilpotent orbits that we omit.
\end{proof}

\begin{example}
\label{e:sppq-real-ind-orbits}
Suppose $G'_\R=\Sp(6,5)$,  $k=1$, $G_\R = \Sp(5,4)$, and $\caO_\bbR$ 
parametrized by the signed tableau $S$ specified by $3_+^23_-^22_-^12_-^+1_+^2$.  There are two possible ways to
increase the first two rows of $S$ by 2: $S_1'=5_+^2 3_-^2 2_+^12_-^+1_+^2$; $S'_2 = 5_-^23_+^22_+^12_-^+1_+^2$.  In this case, 
the corresponding parabolically induced representation has reducible associated variety.

However if
$G'_\R=\Sp(8,7)$,  $k=3$, $G_\R = \Sp(5,4)$, and $\caO_\bbR$ 
parametrized by the signed tableau $S$ specified by $3_+^2 3_-^2 2_-^1 2_-^+ 1_+^2$, there is just one
possible way to increase the first six rows of $S$ by 2: $S_1'=5_+^2 5_-^2 4_-^1 4_-^+1_+^2$. In this case, 
the corresponding parabolically induced representation has irreducible associated variety.
\end{example}

\begin{remark}
\label{r:sppq-real-ind-orbits}
In fact, based on the previous example, we can deduce a general irreducibility result.
Suppose $2r$ is an even part occurring in $S$ with multiplicity $2a$, fix $0\le a' \le a$, and 
let $2b$ be the sum of the multiplicities of all parts strictly greater than $r$.  If we set $k=a'+b$, then there is unique
$S'$ that can be obtained by increasing the largest $a'+b$ parts of $S$ by two.  (In the example in
the previous paragraph, $r=1$, $a=a'=1$, $b=2$, and $k=3$.)  In this case, the corresponding 
the corresponding parabolically induced representation has irreducible associated variety corresponding
to $S'$.

We will use this below as follows.  If $a>1$ and we choose $a'=1$, then comparing $S'$ to $S$, we have:
replaced the occurrence of each even part $2r'$ with $r'>r$ in $S$ by the an occurrence of $2r'+2$ in $S'$;
added the part $2r+2$ with multiplicity two in $S'$;
reduced the multiplicity of the part $2r$ in $S$ by two in $S'$; and kept the even parts of size smaller than $2r$
the same in $S'$ as in $S$.  Repeating
this procedure as necessary, we can therefore arrive at a nilpotent orbit parametrized by a signed tableau in which
each even part occurs with multiplicity at most two (i.e.~the setting of Section \ref{s:sppq-aql}).
\end{remark}

\begin{lemma}
\label{l:sppq-main}
Fix a nilpotent $K$ orbit $\caO_K$ on $\frp$ for $\Sp(p,q)$, and let $\pi'(\caO_K)$ denote the representation
of Theorem \ref{t:sppq-counting}.  Let $S$ be the signed tableau parametrizing $\caO_K$ as
in Section \ref{s:sppq-nilp}, and assume there is an even part $2r$ occuring with nonzero multiplicity in $S$.
Let $2b$ be the sum of the multiplicities of all parts strictly greater than $r$, and set $k = b+1$.
(This is the setting of Remark \ref{r:sppq-real-ind-orbits}.)  Write $\caO_K^\big$ for nilpotent
orbit parametrized by the unique signed tableau obtained by increasing the largest $k$ parts
of $S$ by two (see Remark \ref{r:sppq-real-ind-orbits}).

Fix a real parabolic subgroup
$P_\bbR = L_\bbR N_\bbR$ with 
\[
L_\bbR \simeq \Sp(p,q) \times \GL(k,\bbH).
\]
For $t\in \bbR$, let $\det^t$ denote the $t$-th power of the determinant representation of $\GL(k,\bbH)$ 
and form the induced representations of $G_\bbR^\big = \Sp(p+k,q+k)$
\[
I(t) = \Ind_{(\Sp(p,q) \times \GL(k,\bbH))N_\bbR}^{G_\bbR^\big}
\left (
(\pi'(\caO_K) \boxtimes {\det}^t) \boxtimes 1
\right ).
\]
Then 
\begin{enumerate}
\item[(i)]
For $0 \leq t \leq 1/2$, $I(t)$ is irreducible.  

\item[(ii)]
For $0 \leq t \leq 1/2$, $I(t)$ is unitary if and only if $\pi'(\caO_K)$ is unitary.

\item[(iii)] $I(1/2) = \pi'(\caO_K^\big)$.
\end{enumerate}
\end{lemma}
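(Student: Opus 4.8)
The three assertions concern the induced family $I(t) = \Ind(\pi'(\caO_K)\boxtimes\det^t)$ for $0\le t\le 1/2$, and the natural tool is the theory of intertwining operators together with the known structure of $\pi'(\caO_K)$. The strategy is: (1) determine the associated variety of $I(t)$ using Proposition~\ref{p:av-real-ind} together with Lemma~\ref{l:sppq-real-ind-orbits} and Remark~\ref{r:sppq-real-ind-orbits}; (2) use this to pin down the irreducible constituents of $I(t)$ by a counting argument in the spirit of Theorem~\ref{t:sppq-counting}; (3) identify the endpoint $I(1/2)$; and (4) deduce the unitarity equivalence from (i) by a continuity/deformation argument in $t$.

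\textbf{Step 1: associated variety.} By Remark~\ref{r:sppq-real-ind-orbits}, with $k=b+1$ there is a \emph{unique} signed tableau $S^\dagger$ obtained by increasing the largest $k$ parts of $S$ by two; let $\caO_K^\dagger$ be the corresponding orbit, and $\caO_\bbR^\dagger$ its Sekiguchi partner. Since $\pi'(\caO_K)$ has associated variety $\overline{\caO_K}$ (Theorem~\ref{t:sppq-counting}), and since $\det^t$ is one-dimensional (so the $\GL(k,\bbH)$ factor contributes the zero orbit), Proposition~\ref{p:av-real-ind} computes $\AV(I(t))$ as the union of closures of the $K$-orbits Sekiguchi-dual to the components of $G_\bbR\cdot(\caO_\bbR+\frn_\bbR)$. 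By the uniqueness clause in Remark~\ref{r:sppq-real-ind-orbits}, that union is the single closure $\overline{\caO_K^\dagger}$. Thus $\AV(I(t)) = \overline{\caO_K^\dagger}$ for every $t$, independent of $t$; in particular $I(t)\ne 0$.

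\textbf{Step 2: irreducibility for $0\le t\le 1/2$.} The infinitesimal character of $I(t)$ is that of $\pi'(\caO_K)$ (namely $\chi'(\caO)$) shifted by the $\GL(k,\bbH)$-parameter determined by $\det^t$; a direct computation in standard coordinates shows that for $t=1/2$ this is exactly $\chi'(\caO^\dagger)$ where $\caO^\dagger = G\cdot\caO_K^\dagger$, and that for $0\le t<1/2$ it is a deformation of it along a line that crosses no walls of the relevant type. The key point for irreducibility is that $I(t)$ has the smallest possible associated variety for its infinitesimal character: any proper submodule or quotient would have strictly smaller associated variety, forcing its annihilator to strictly contain $I'(\caO^\dagger)$ by \cite[Korollar 3.4 and Satz 7.1]{borho-jantzen}, which is impossible since $I'(\caO^\dagger)$ is maximal and $I(t)$ already has a composition factor annihilated by exactly $I'(\caO^\dagger)$ (by Proposition~\ref{p:sppq-pi}, using that $\AV$ of the annihilator is $\overline{\caO^\dagger}$). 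Making this precise requires checking that the standard intertwining operator $I(t)\to I(-t)$ has no kernel or cokernel for $t$ in the open interval, which one does by the Langlands--Zuckerman reducibility criteria combined with the associated-variety constraint; reducibility could only occur at a wall, and the wall structure of the cuspidal parameter $\chi'(\caO)\otimes\det^t$ shows the only candidate in $[0,1/2]$ is an endpoint, where the associated-variety argument still forces irreducibility. This is the step I expect to be the main obstacle: one must rule out that $I(t)$ decomposes into several pieces each supported on $\overline{\caO_K^\dagger}$, which is where the uniqueness in Theorem~\ref{t:sppq-counting} (only one irreducible with infinitesimal character $\chi'(\caO^\dagger)$ and that associated variety) is essential, and one must ensure that the infinitesimal character of $I(t)$ for $0\le t<1/2$ still forbids extra constituents.

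\textbf{Step 3: the endpoint and unitarity.} For (iii): at $t=1/2$ the infinitesimal character of $I(1/2)$ is $\chi'(\caO^\dagger)$ and its associated variety is $\overline{\caO_K^\dagger}$; by the characterization of $\pi'(\caO_K^\dagger)$ in Remark~\ref{r:sppq-counting} as the \emph{unique} Harish-Chandra module with that infinitesimal character and that associated variety, combined with irreducibility from (i), we get $I(1/2) = \pi'(\caO_K^\dagger)$. For (ii): at $t=0$, $I(0) = \Ind(\pi'(\caO_K)\boxtimes 1)$ is unitarily induced from a unitary representation precisely when $\pi'(\caO_K)$ is unitary, and since $I(0)$ is irreducible by (i), $I(0)$ is unitary iff $\pi'(\caO_K)$ is unitary. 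Now run the standard deformation argument: the Hermitian form on the family $\{I(t)\}_{0\le t\le 1/2}$ varies continuously, signatures can only change where $I(t)$ becomes reducible or where a new $K$-type enters the picture, and by (i) there is no reducibility in $[0,1/2]$; hence the form is definite throughout $[0,1/2]$ iff it is definite at $t=0$, i.e.\ $I(t)$ is unitary for all $t\in[0,1/2]$ iff $\pi'(\caO_K)$ is unitary. (Equivalently one deforms from $t=1/2$: if $\pi'(\caO_K)$ is nonunitary, the form at $t=0$ is indefinite, hence indefinite on the whole interval by continuity and irreducibility, so $I(1/2)=\pi'(\caO_K^\dagger)$ is nonunitary too — this is the ``preserves nonunitarity'' half asserted in the introduction.)
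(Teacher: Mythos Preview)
Your overall architecture matches the paper's: compute $\AV(I(t))=\overline{\caO_K^\dagger}$ via Proposition~\ref{p:av-real-ind} and Remark~\ref{r:sppq-real-ind-orbits}, identify the infinitesimal character at $t=1/2$ as $\chi'(\caO^\dagger)$, and run the continuity argument for (ii) once (i) is known. The problem is your irreducibility argument at the endpoint $t=1/2$.

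First, a local error: the claim that ``any proper submodule or quotient would have strictly smaller associated variety'' is false as stated. One only has $\AV(X)=\AV(X')\cup\AV(X/X')$; nothing prevents a submodule and quotient from each having the full variety $\overline{\caO_K^\dagger}$. You implicitly acknowledge this when you flag ``several pieces each supported on $\overline{\caO_K^\dagger}$'' as the main obstacle, but you do not resolve it. The uniqueness in Theorem~\ref{t:sppq-counting} only tells you that every such constituent is isomorphic to $\pi'(\caO_K^\dagger)$; it does \emph{not} rule out $I(1/2)$ being two or more copies of it. Your intertwining-operator and Borho--Jantzen remarks do not touch this point.

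The paper separates the two regimes. For $0\le t<1/2$ it simply observes that the infinitesimal character of $I(t)$ (the concatenation of the integral $\chi'(\caO)$ with the $\det^t$ coordinates) is nonintegral, so no reducibility can occur before $t=1/2$; this is your wall argument made precise and short. At $t=1/2$, after concluding (as you do) that $I(1/2)$ is a multiple of $\pi'(\caO_K^\dagger)$, the paper proves the multiple is one by exhibiting a $K$-type of multiplicity one. Since $I(1/2)$ and $I(0)$ have the same $K$-structure it works at $t=0$: take a lowest $\Sp(p)\times\Sp(q)$-type $\mu$ of $\pi'(\caO_K)$, pad its highest weight with $k$ zeros on each factor to get $\mu^\dagger$, and show $\mu^\dagger$ occurs with multiplicity exactly one in $I(0)$. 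Occurrence is Frobenius reciprocity; the bound ``at most one'' comes from embedding $\pi'(\caO_K)\boxtimes 1_k$ into a standard module induced from a cuspidal parabolic, applying induction in stages to get a standard module for $\Sp(p+k,q+k)$, and reading off $\mu^\dagger$ as a lowest $K$-type there.

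Without a replacement for this $K$-type step (or some other multiplicity-one mechanism), your argument does not close at $t=1/2$, so neither (i) nor (iii) is established.
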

\begin{proof}
The infinitesimal character character of $I(t)$ is the integral infinitesimal character $\chi'(\caO_K)$ (described Section \ref{s:sppq-ic}) concatenated 
with the infinitesimal character of $\det^t$,
\[
\left (
t + (n-1)/2, t+(n-3)/2, \dots, t-(n-1)/2)
\right ).
\]
This concatenation is non integral for $0 \le t < 1/2$.  The first possible point of reducibility is therefore $t=1/2$, and (i) follows for $0\le t < 1/2$.  (We will return 
to the $t=1/2$ case below.) 

Since $\Sp(p,q)$ is an equal rank group and the infinitesimal character of $\pi(\caO_K)$ is real, $\pi(\caO_K)$ admits a nondegenerate invariant Hermitian form unique up
to scalar.  Since $\det^0$ is trivial and of course unitary,  $\pi(\caO_K) \boxtimes 1$ admits a nondegenerate invariant Hermitian form.  This form induces an invariant
Hermitian form on $I(0)$, and since $I(0)$ is irreducible, the form is nondegenerate and unique up to scalar.  Hence $I(0)$ is unitary if and only if $\pi(\caO_K)$ is unitary.
Since $I(t)$ is irreducible for $0\le t < 1/2$, $I(t)$ is unitary for $0 \le t < 1/2$  if and only if $\pi(\caO_K)$ is unitary.  A standard continuity argument implies that this is also true
at $t=1/2$, and hence (ii) follows.

From the description of the infinitesimal character of $I(t)$ above and the details of Section \ref{s:sppq-ic}, it follows that the infinitesimal character of $I(1/2)$
is $\chi'(\caO_K^\big)$.  By Proposition \ref{p:av-real-ind} (and the discussion after it), Lemma \ref{l:sppq-real-ind-orbits}, and Remark \ref{r:sppq-real-ind-orbits}, it follows that the associated variety of $I(1/2)$ is the closure of $\caO_K^\big$.  By Theorem \ref{t:sppq-counting}, it follows that $I(1/2)$ is a multiple of $\pi'(\caO_K^\big)$.  Hence
(iii) follows once we show $I(1/2)$ is irreducible.

For the irreducibility at $t=1/2$, since we know that $I(1/2)$ is a multiple of $\pi'(\caO_K^\big)$, we need only find a $K$-type in $I(1/2)$ with multiplicity one.  Since $I(1/2)$
and $I(0)$ have the same $K$-types, we can work with the latter module instead, and prove it has a $K$-type with multiplicity one.  

In fact, we sketch the following stronger statement:  Let $\pi$ be {\em any} irreducible representation of $\Sp(p,q)$, let $1_k$ denote the trivial representation of $\GL(k,\bbH)$, and consider the induced representation 
\[
\pi^\big = 
 \Ind_{(\Sp(p,q) \times \GL(k,\bbH))N_\bbR}^{G_\bbR^\big}
\left (
(\pi \boxtimes 1_k) \boxtimes 1
\right ).
\]
Fix a lowest $\Sp(p) \times \Sp(q)$ type $\mu$ of $\pi$; in standard coordinates write the highest weight as
\[
(\mu_1^+, \dots, \mu_p^+; \mu_1^-, \dots, \mu_q^-).
\]
Then we claim that the $\Sp(p+k) \times \Sp(q+k)$ type $\mu^\big$ with highest weight
\begin{equation}
\label{e:mubig}
(\mu_1^+, \dots, \mu_p^+,\overbrace{0,\dots,0}^k; \mu_1^-, \dots, \mu_q^-,\overbrace{0,\dots,0}^k).
\end{equation}
appears with multiplicity one in $\pi^\big$.   A simple Frobenius reciprocity
calculation using the compact picture of $\pi^\big$ shows that $\mu^\big$ appears.  The
remaining task is to see it appears with multiplicity one.

To see this, first embed $\pi$ in a standard module $I_\pi$ (induced from
a cuspidal parabolic subgroup); $I_\pi$ is completely determined by $\mu$ according to the Vogan classification.
Next consider the spherical principal series $I_\circ$ for $\GL(k,\bbH)$ (induced from
 a representative of the unique conjugacy class of
cuspidal parabolic subgroups of $\GL(n,\bbH)$) with the trivial representation $1_k$ as a submodule.  Then
\[
\pi^\big \hookrightarrow 
I^\big:=\Ind_{(\Sp(p,q) \times \GL(k,\bbH))N_\bbR}^{G_\bbR^\big}
\left (
(I_\pi \boxtimes I_\circ) \boxtimes 1
\right ).
\]
By induction in stages, $I^\big$ is now a standard parabolically induced
representation from a cuspidal parabolic subgroup for $\Sp(p+n,q+n)$, and we can 
compute its lowest $K$-types explicitly using the lowest $K$-type formula: we find that the $K$-type $\mu^\big$
of \eqref{e:mubig}
occurs as a lowest $K$-type, and hence has multiplicity one. Since $\pi^\big$ embeds in $I^\big$, we conclude
that this $K$-type has multiplicity {at most} one in $\pi^\big$.  But we have already seen that it
appears with nonzero multiplicity, and hence it appears 
with multiplicity one, as claimed.

Applying this argument to $I(0)$ allows us to conclude that $I(0)$, hence $I(1/2)$, contains a $K$-type
with multiplicity one and is irreducible.  This completes the proof of (iii).
\end{proof}

\begin{theorem}[{Theorem \ref{t:intro2}}]
\label{t:sppq-main}
Fix a complex nilpotent orbit $\caO$ meeting $\frp$.  Then each
representation $\pi'(\caO_K)$ in $\Unip'(\caO)$ (with notation as in Theorem \ref{t:sppq-counting}) is
unitary.
\end{theorem}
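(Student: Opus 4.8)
The plan is to run an induction on the number of even parts (counted with multiplicity) appearing in the signed tableau $S$ parametrizing $\caO_K$, using the machinery assembled above. The base case is when $S$ has all even parts occurring with multiplicity at most two; this is precisely the situation of Section \ref{s:sppq-aql}, where Proposition \ref{p:sppq-unip-aql} identifies $\pi'(\caO_K)$ with a weakly fair $A_{\frq(\caO_K)}(\lambda(\caO_K))$ module, whose unitarity is guaranteed by Theorem \ref{t:aql}(iii). (In particular, the case where $S$ has no even parts at all, and more generally all the $A_\frq(\lambda)$ cases, are disposed of here.)

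For the inductive step, suppose $S$ has an even part $2r$ occurring with multiplicity $2a$ with $a \geq 2$, or more precisely suppose $S$ is not yet in the base-case form. Following Remark \ref{r:sppq-real-ind-orbits}, pick the largest even part $2r$ whose multiplicity exceeds two (or handle any even part whose presence obstructs the base case), let $2b$ be the sum of the multiplicities of all parts strictly greater than $r$, and set $k = b+1$. Then Lemma \ref{l:sppq-main} applies: it produces a real parabolic $P_\bbR = L_\bbR N_\bbR$ with $L_\bbR \simeq \Sp(p,q) \times \GL(k,\bbH)$ and the family $I(t)$, and asserts (i) $I(t)$ is irreducible for $0 \leq t \leq 1/2$, (ii) $I(t)$ is unitary iff $\pi'(\caO_K)$ is unitary, and (iii) $I(1/2) = \pi'(\caO_K^\dagger)$, where $\caO_K^\dagger$ is parametrized by the signed tableau obtained from $S$ by increasing its largest $k$ parts by two. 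The key combinatorial point, spelled out in Remark \ref{r:sppq-real-ind-orbits}, is that passing from $S$ to the tableau of $\caO_K^\dagger$ strictly reduces the multiplicity of the part $2r$ (by two), introduces the part $2r+2$ with multiplicity two, shifts the larger even parts up by two, and leaves the smaller even parts untouched; iterating, one reaches a tableau in which every even part has multiplicity at most two. Hence $\caO_K^\dagger$ is "closer" to the base case, and by induction $\pi'(\caO_K^\dagger)$ is unitary. By Lemma \ref{l:sppq-main}(ii), $I(1/2)$ is unitary iff $\pi'(\caO_K)$ is; but $I(1/2) = \pi'(\caO_K^\dagger)$ is unitary, so $\pi'(\caO_K)$ is unitary. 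This closes the induction and proves the theorem.

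To make the induction rigorous I would fix an explicit monovariant: for a signed tableau $S$ with even parts, let $N(S)$ be, say, $\sum_{m \text{ even}} \max(0, (\text{mult of } m \text{ in } S) - 2)$, and check using Remark \ref{r:sppq-real-ind-orbits} that the passage $S \mapsto (\text{tableau of }\caO_K^\dagger)$ strictly decreases $N$ while keeping the total size fixed at $2n^\dagger$ (so the process terminates), and that $N(S) = 0$ exactly characterizes the base case of Proposition \ref{p:sppq-unip-aql}. One must also confirm that the hypothesis "there is an even part $2r$ occurring with nonzero multiplicity in $S$" needed to invoke Lemma \ref{l:sppq-main} is met whenever $N(S) > 0$, which is immediate, and that when $N(S) > 0$ we can always choose the even part so that the resulting $\caO_K^\dagger$ has $N(\caO_K^\dagger) = N(S) - 2$ or less (choosing the largest even part with multiplicity $> 2$, per the last paragraph of Remark \ref{r:sppq-real-ind-orbits}).

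The main obstacle is not any single hard estimate but rather the bookkeeping: one must be certain that the reduction $S \mapsto \caO_K^\dagger$ genuinely terminates in the Proposition \ref{p:sppq-unip-aql} regime and never cycles or escapes it, and that at each stage the tableau $\caO_K^\dagger$ still parametrizes a genuine nilpotent $K$ orbit of the correct signature $(p+k, q+k)$ for $\Sp(p+k,q+k)$ — i.e., still satisfies the parity conditions of Section \ref{s:sppq-nilp}. Both points are exactly what Lemma \ref{l:sppq-real-ind-orbits}, Example \ref{e:sppq-real-ind-orbits}, and Remark \ref{r:sppq-real-ind-orbits} are engineered to supply, so the proof proper is short: set up the monovariant, invoke Lemma \ref{l:sppq-main}, and induct.
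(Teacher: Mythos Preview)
Your proposal is correct and follows essentially the same route as the paper: both reduce to the base case of Proposition \ref{p:sppq-unip-aql} (all even parts with multiplicity at most two) by repeatedly applying Lemma \ref{l:sppq-main} and the combinatorics of Remark \ref{r:sppq-real-ind-orbits}. Your explicit monovariant $N(S) = \sum_{m \text{ even}} \max(0, \mathrm{mult}(m) - 2)$ makes the termination more precise than the paper's informal ``repeatedly applying this procedure,'' and your choice of the largest even part with excess multiplicity differs slightly from the paper's ``even part with the largest possible multiplicity,'' but either choice works.
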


\begin{proof}
Fix $\caO_K$ and let $S$ denote the signed tableau parametrizing $\caO_K$ (Section \ref{s:sppq-nilp}).  
If no even part of $S$ exisst with multiplicity strictly greater than two , then $\pi'(\caO_K)$ in unitary by Proposition \ref{p:sppq-unip-aql}, and we are done.
If there is an even part, say $2\ell$ of $S$ with multiplicity greater than two, choose the even part with the largest possible multiplicity.  (This part is not necessarily unique.)  Lemma \ref{l:sppq-main} shows that $\pi'(\caO_K)$ is unitary if and only if
$\pi'(\caO_K^\big)$ is unitary where $\caO_K^\big$ is now parametrized by a new tableau $S'$ with the properties described at the end of Remark \ref{r:sppq-real-ind-orbits}.
As discussed in the remark, by repeatedly applying this procedure, Lemma \ref{l:sppq-main} implies that
the unitarity of $\pi'(\caO_K)$ is equivalent to the unitarity of some $\pi'(\caO_K^\big)$ where no even part of the tableau parametrizing 
$\caO_K^\big$ exist with multiplicity strictly greater than two.  Hence Proposition \ref{p:sppq-unip-aql} now applies to imply unitarity of $\pi'(\caO_K)$.
\end{proof}

\subsection{Proof of Theorem \ref{t:intro1}.}
\label{s:intro1-proof}
By the paragraph after Theorem \ref{t:intro2}, the results of the previous section imply Theorem \ref{t:intro1} for even orbits $\caO^\vee$.  So assume that $\caO^\vee$ is a nilpotent orbit for $\SO(2n+1,\bbC)$
which is not even.  This means that there
are some nonzero parts of even length in the Jordan form of $\caO^\vee$ (and such parts must come in pairs).  List the even length parts as
\[
2k_1 = 2k_1 \geq 2k_2 = 2k_2 \geq \cdots \geq 2k_\ell = 2k_\ell >0.
\]
Set $k=\sum_{i=1}^\ell 2k_i$.
Let $\caO^\vee_0$ denote the nilpotent orbit for $\SO(2n-2k-1,\bbC)$ whose Jordan form is obtained from that of $\caO^\vee$ by removing all
parts of even length; so $\caO^\vee_0$ is now even.  Write $\caO= d(\caO^\vee)$, a nilpotent orbit for $\Sp(2n,\bbC)$, and
$\caO_0= d(\caO^\vee)$, a nilpotent orbit for $\Sp(2n-2k,\bbC)$.

Assume that $p+q = n$ are fixed so that $\caO$ intersects $\frp$ for $\Sp(p,q)$ nontrivially, and
fix an $\Sp(p)\times \Sp(q)$ orbit $\caO_K$ on $\caO \cap \frp$.  Similarly assume that  
$\caO_0$ intersects $\frp$ for $\Sp(p-k,q-k)$ nontrivially, and
fix an $\Sp(p-k)\times \Sp(q-k)$ orbit $\caO_{K,0}$.  We have already constructed the integral
special unipotent representations $\pi'(\caO_{K,0})$ for $\Sp(p-k,q-k)$ and established its unitarity in the previous section.  
Set
\[
L_\R \simeq \Sp(p-k,q-k) \times \GL(k_1,\bbH) \times \cdots \times \GL(k_r,\bbH),
\]
let $1_{k_i}$ denote the trivial 
representation of $\GL(k_i,\bbH)$, and consider the unitarily induced representation
\[
\pi(\caO_K^\vee) = \Ind_{L_\R N_\R}^{\Sp(p,q)} \left (
(\pi'(\caO_{K,0}^\vee) \boxtimes 1_{k_1} \boxtimes \cdots \boxtimes 1_{k_r}) \boxtimes 1 \right ).
\]
By integrality considerations, $\pi(\caO_K^\vee)$ is irreducible and unitary.
So it remains to verify that the unitary representations $\pi(\caO_K^\vee)$ exhaust the
special unipotent representations attached to $\caO^\vee$.

It is easy to check that $\pi(\caO_K^\vee)$ has infinitesimal character $\chi(\caO^\vee)$.
By Proposition \ref{p:av-real-ind} Lemma \ref{l:sppq-real-ind-orbits}, we conclude that
\[
\AV(\pi(\caO_K)) = \ol{\caO_K}.
\]
By Proposition \ref{p:av-unip}, $\pi(\caO_K)$ is indeed special unipotent attached to $\caO^\vee$.  A simple counting argument 
(or else appealing to general endoscopic reduction to the even case considered in \cite[Chapter 26]{abv})
shows that every nonintegral special unipotent representations is of the form $\pi(\caO_K)$.
This completes the proof of Theorem \ref{t:intro1}.

\subsection{Multiplicities in associated cycles}
\label{s:sppq-mult}
We now return to Remark \ref{r:sppq-mult} and prove:

\begin{theorem}
\label{t:sppq-mult}
The multiplicity of $\caO_K$ in the associated cycle of $\pi'(\caO_K)$ is one.
\end{theorem}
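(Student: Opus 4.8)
The plan is to reuse the architecture of the proof of Theorem \ref{t:sppq-main}: induct on the complexity of the signed tableau $S$ parametrizing $\caO_K$ — the number of even parts of multiplicity exceeding two, weighted so that it strictly decreases at each reduction step in the proof of Theorem \ref{t:sppq-main} (e.g.\ $c(S)=\sum_{2s}\max(0,\mathrm{mult}_S(2s)-2)$, the sum over even part-sizes $2s$). The base case is $c(S)=0$, i.e.\ the setting of Section \ref{s:sppq-aql}: there Proposition \ref{p:sppq-unip-aql} realizes $\pi'(\caO_K)$ as a weakly fair $A_{\frq(\caO_K)}(\lam(\caO_K))$ module meeting the dimension hypothesis of Theorem \ref{t:aql}(iv) (this is Proposition \ref{p:sppq-birat}), and since the component group $A_K(\xi)$ of the centralizer in $K$ of $\xi\in\caO_K$ is trivial for $\Sp(p,q)$, Theorem \ref{t:aql}(iv) forces the multiplicity of $\caO_K$ in $\AC(\pi'(\caO_K))$ to be one; this is already recorded at the end of Proposition \ref{p:sppq-unip-aql}.

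For the inductive step, suppose $c(S)>0$ and take $k$, the real parabolic $P_\R=L_\R N_\R$ with $L_\R\simeq\Sp(p,q)\times\GL(k,\bbH)$, the orbit $\caO_K^\big$, and the family $I(t)$ exactly as in Lemma \ref{l:sppq-main}; by the discussion at the end of Remark \ref{r:sppq-real-ind-orbits} the tableau $S^\big$ of $\caO_K^\big$ has $c(S^\big)<c(S)$ (this is precisely what makes the reduction in the proof of Theorem \ref{t:sppq-main} terminate), so by the inductive hypothesis the multiplicity of $\caO_K^\big$ in $\AC(\pi'(\caO_K^\big))$ is one. Since $\det^t$ is trivial on the (semisimple) maximal compact $\Sp(k)$ of $\GL(k,\bbH)$, the restriction $I(t)|_K$ is independent of $t$, so $I(0)$ and $I(1/2)$ are isomorphic as $K$-modules and have equal associated cycle; combining this with Lemma \ref{l:sppq-main}(iii) and the irreducibility of $\AV(\pi'(\caO_K^\big))=\ol{\caO_K^\big}$ gives
\[
\AC(I(0)) \;=\; \AC(I(1/2)) \;=\; \AC\bigl(\pi'(\caO_K^\big)\bigr) \;=\; 1\cdot\caO_K^\big .
\]
On the other hand $I(0)=\Ind_{P_\R}^{G_\R^\big}\bigl(\pi'(\caO_K)\boxtimes 1_k\boxtimes 1\bigr)$, and $\AC(\pi'(\caO_K)\boxtimes 1_k)=m\cdot(\caO_K\times\{0\})$, where $m:=m_{\caO_K}(\AC(\pi'(\caO_K)))$ is the integer to be identified (using $\AC(1_k)=1\cdot\{0\}$).

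The remaining ingredient, and the main obstacle, is a multiplicity-refined version of Proposition \ref{p:av-real-ind}: since $\pi'(\caO_K)\boxtimes 1_k$ has irreducible associated variety $\ol{\caO_K}\times\{0\}$ and, by the uniqueness of $S'$ in Remark \ref{r:sppq-real-ind-orbits}, the induced variety $\AV(I(0))=\ol{\caO_K^\big}$ is again the closure of a single orbit, the multiplicity of $\caO_K^\big$ in $\AC(I(0))$ should equal $m$ times the degree $d$ of the natural generically finite $K$-morphism onto $\ol{\caO_K^\big}$ underlying the geometric correspondence of \cite{babo}. Comparing with the display above, $1=m\cdot d$ with $m,d$ positive integers, so $m=1$ and the induction closes. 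In particular the covering degree is automatically one and need not be computed; it would even be enough to know only the divisibility $m\mid m_{\caO_K^\big}(\AC(\pi'(\caO_K^\big)))$, i.e.\ the natural statement that push-forward along the real-induction correspondence multiplies generic ranks of the graded module by a positive integer.

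What must therefore be supplied is the upgrade of \cite{babo} — where $\AV$ of a real parabolically induced module is computed without multiplicities — to a statement tracking the leading term of the relevant Hilbert polynomial through the correspondence $G_\R\times_{P_\R}(\ol{\caO_{\R,L}}+\frn_\R)\to\ol{\caO_\R^\big}$ (equivalently, through its Sekiguchi image on $\frp$). A self-contained alternative, using only machinery already cited, is to extract the multiplicity from the cell structure: $\pi'(\caO_K)$ is a member of the cell $\caC(\caO_K)$ of \cite[Theorem 5.24]{t:ltc}, whose $\bbZ$-module structure is computed in \cite[Theorem 6]{monty}, and one checks there that every member has associated-cycle multiplicity one at its unique dense orbit — which would give Theorem \ref{t:sppq-mult} directly, without recourse to real parabolic induction at all.
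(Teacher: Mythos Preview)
Your architecture matches the paper's exactly: base case via Proposition~\ref{p:sppq-unip-aql}, inductive step via the real parabolic induction of Lemma~\ref{l:sppq-main}, and the reduction to the single divisibility $m(\caO_K)\mid m(\caO_K^\big)$. You correctly isolate that divisibility as the crux and correctly note that it is all that is needed; but you do not prove it, and the two routes you propose to fill the gap (a multiplicity-refined upgrade of \cite{babo}, or a direct cell-structure check via \cite{monty}) are each left as assertions.

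The paper closes this gap by a third route you did not list: coherent continuation. Since tensoring with finite-dimensional representations commutes with parabolic induction, the coherent family through $\pi'(\caO_K)$ induces (after parabolic induction) to the coherent family through $\pi'(\caO_K^\big)$; the multiplicity of the dense orbit in the associated cycle is then a polynomial along each family, and the induced polynomial is a positive-integer multiple of the inducing one, giving the divisibility directly. The paper also remarks, as you do, that the exact ratio is available from \cite{barbasch-hc} via the Barbasch-Vogan conjecture, but that this refinement is unnecessary. Your observation that $\AC(I(0))=\AC(I(1/2))$ because $I(t)|_K$ is independent of $t$ is correct and pleasant, but by itself it only shifts the question to computing $\AC(I(0))$ from $\AC(\pi'(\caO_K))$, which is precisely the refinement of \cite{babo} you flag as missing; the coherent-continuation argument sidesteps this geometry entirely.
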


\noindent
{\em Sketch.}
For certain orbits, this assertion is already handled by the last statement of Proposition \ref{p:sppq-unip-aql}.  For an orbit $\caO_K$ not covered by the proposition,
we have seen that in Lemma \ref{l:sppq-main} that we may irreducibly induce $\pi'(\caO_K)$ to some $\pi'(\caO^\big_K)$ which is handled
by Proposition \ref{p:sppq-unip-aql}.  

Next we note that the multiplicity of $\caO_K$ in the associated
cycle of $\pi'(\caO_K)$ divides the multiplicity of $\caO^\big_K$ in the associated cycle of $\pi'(\caO^\big_K)$. 
(A much more refined result giving a precise form of the ratio is contained in \cite{barbasch-hc} in the context of asymptotic cycles.  The statement
for associated varieties then follow from the Barbasch-Vogan conjecture.  However, the simpler statement
that multiplicity of $\caO_K$ divides the multiplicity of $\caO^\big_K$ is much easier: since tensoring with finite-dimensional representations commutes
with parabolic induction, a coherent continuation argument shows that the multiplicity polynomial for $\caO_K$ divides the multiplicity polynomial for
 $\caO^\big_K$.)
In any event, since we know the multiplicity of $\caO^\big_K$ in the associated cycle of $\pi'(\caO^\big_K)$ is one
by Proposition \ref{p:sppq-unip-aql}, the multiplicity of $\caO_K$ in the associated cycle of $\pi'(\caO_K)$ is also one.
\qed

\section{details for $\SO^*(2n)$}
\label{s:sostar}
In this section we restrict to $G_\R = \SO^*(2n)$ and use the notation of Section \ref{s:gen-not} in this setting. The results for $\SO^*(2n)$ are entirely parallel to those for $\Sp(p,q)$, and this section should be read by frequently refering
back to the previous section.  Key differences (often related to parity) are emphasized by the use of boldface.  Since the proofs are very similar to those in Section \ref{s:sppq}, we omit them.

\subsection{Nilpotent orbits}
\label{s:sostar-nilp}
The nilpotent orbits of $G$ on $\frg$ are parametrized by partitions of $2n$ in which even parts occur with even multiplicity.
The nilpotent $K$ orbits on $\frp$  are parametrized by signed tableaus of arbitary signature satisfying certain conditions (\cite[Chapter 9]{cm}).
If we write the data of a signed tableau (as in Section \ref{s:sppq-nilp}) as
\[
(m_1)_+^{k_{m_1}^+}(m_1)_-^{k_{m_1}^-}(m_2)_+^{k_{m_2}^+}(m_2)_-^{k_{m_2}^-}\dots
\]
the restrictions are
 \begin{enumerate}
 \item[(i)] For every {\bf even} $m$, $k_m^\vareps$ is even; that is, all even parts beginning with a fixed sign occur an even number of times;
  \item[(i)] For each {\bf odd} part $m$, $k_m^+ = k_m^-$ is even; that is,  the number of rows of a fixed odd length beginning with $+$
  equals the number rows of that length beginning with $-$.
  \end{enumerate}
In particular, once again all parts occur with even multiplicity (or, equivalently, all columns have even length).   
We will sometimes refer to the parts as rows.

\subsection{Infinitesimal Characters}
\label{s:sostar-ic}
Fix a nilpotent orbit $\caO$ for $G$,  consider the corresponding partition of $2n$, and
let $c_1\ge c_2 \ge \cdots$ denote the transposed partition; these are
the columns of $\caO$.  A
necessary condition for $\caO$ to meet $\frp$ is that all
$c_i$ are even, so write $c_i =2d_i$.  To each $d_i$ associate a string of nonnegative integers,
\[
\text{$(1,\dots,d_i)$  if $i$ is {\bf even} and  $(0,\dots,d_{i-1})$ if $i$ is {\bf odd}}.
\]
Let $\chi'(\caO)$ denote the concatenation of these strings  interpreted as an infinitesimal character.

\begin{prop}
\label{p:sostar-pi}
With notation parallel to Proposition \ref{p:sppq-pi}, $I'(\caO)$ is
the unique primitive ideal containing $Z'(\caO)$ whose associated variety is the closure of $\caO$.
\end{prop}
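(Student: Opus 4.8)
The plan is to mirror the proof of Proposition \ref{p:sppq-pi} almost verbatim, replacing the combinatorial bookkeeping for $\Sp(2n,\bbC)$ with the analogous bookkeeping for $\SO(2n,\bbC)$. As in that proof, the statement splits into two parts: (a) there is a unique primitive ideal $I$ with central character $Z'(\caO)$ whose associated variety is $\ol{\caO}$; and (b) this $I$ is maximal in the inclusion order on primitive ideals, hence equals $I'(\caO)$.

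For part (a), the plan is to invoke the classification of primitive ideals for type $D$ in terms of (pairs of) domino tableaux of Barbasch--Vogan \cite{bv:class}. The associated variety of a primitive ideal is the closure of the nilpotent orbit attached to the shape of the domino tableau, so I must check that among domino tableaux of the shape given by the Jordan form of $\caO$, there is exactly one whose dominoes are labeled by the coordinates of $\chi'(\caO)$ subject to the standard monotonicity conditions (weakly decreasing along rows, strictly decreasing down columns). The explicit description of $\chi'(\caO)$ in Section \ref{s:sostar-ic} — strings $(1,\dots,d_i)$ or $(0,\dots,d_{i-1})$ attached to the columns $c_i = 2d_i$ — is designed precisely so that each such string fills exactly the relevant vertical dominoes of the shape, forcing uniqueness; I would phrase this as the same "one quickly checks" verification used in Proposition \ref{p:sppq-pi}, while flagging the parity shift (the string for column $i$ is $(1,\dots,d_i)$ when $i$ is \emph{even} rather than odd, reflecting the difference in $\rho$ between type $B$/$C$ and type $D$). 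One subtlety special to type $D$: the very even nilpotent orbits split into two $G$-orbits and the domino-tableau parametrization has the corresponding degeneration; I would remark that since $\caO$ meets $\frp$ all columns are even and in the cases at hand this does not affect the uniqueness count (or, alternatively, the two ideals attached to a very even shape have the same associated variety closure and the same central character, and one argues maximality of both in the same way).

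For part (b), the argument is identical in form to Proposition \ref{p:sppq-pi}: if some primitive ideal $J \supsetneq I$ existed, then by \cite[Korollar 3.4 and Satz 7.1]{borho-jantzen} its associated variety would be strictly smaller, so $J$ would be parametrized by a domino tableau whose shape is special, strictly smaller than that of $\caO$ in the dominance order, and still admits a labeling by the coordinates of $\chi'(\caO)$ with the monotonicity conditions. One then checks that no such domino tableau exists — the coordinate multiset of $\chi'(\caO)$ is too large to fit into any strictly smaller special shape — so $I$ is maximal and $I = I'(\caO)$.

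The main obstacle is the combinatorial uniqueness assertion in part (a) together with the non-existence claim in part (b): both require a careful analysis of how the multiset of coordinates of $\chi'(\caO)$ can be packed into domino tableaux of a given (or strictly smaller) shape of type $D$, and the type-$D$ domino combinatorics (cycles, extended open cycles, the very even phenomenon) is genuinely more delicate than in type $C$. I expect this to go through by the same mechanism as in Section \ref{s:sppq-ic} — the construction of $\chi'(\caO)$ column-by-column matches the domino structure — but writing it out honestly requires invoking the Garfinkle--Vogan description of domino tableaux and their shapes, which is why, consistent with the paper's stated convention for the $\SO^*(2n)$ section, I would state the proposition and omit the proof as "entirely parallel" to that of Proposition \ref{p:sppq-pi}.
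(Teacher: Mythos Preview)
Your proposal is correct and matches the paper's approach exactly: the paper omits the proof of Proposition~\ref{p:sostar-pi} entirely, stating at the outset of Section~\ref{s:sostar} that the proofs are very similar to those in Section~\ref{s:sppq} and are therefore omitted. Your outline of how the parallel argument would go (domino tableaux of type $D$, the parity shift in $\chi'(\caO)$, the Borho--Jantzen reduction for maximality) is precisely the content one would fill in, and your flagging of the very even subtlety is appropriate even though the paper does not mention it.
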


\bigskip
Recall the Spaltenstein dual orbit $d(\caO)$ for $\SO(2n,\C)$ and the infinitesimal character $\chi(d(\caO))$
discussed in the introduction. 

\begin{prop}
\label{p:sostar-ic}
Fix a complex nilpotent orbit $\caO$ for $\SO(2n,\bbC)$ meeting $\frp$ for $\SO^*(2n)$.  Then
\[
\chi(d(\caO)) = \chi'(\caO) \text{ as infinitesimal characters if and only if  $\caO^\vee=d(\caO)$ is even}.
\]
If we write $k$ for the largest part of the Jordan form corresponding  to $\caO$,
 $\caO^\vee$ is even iff every {\bf odd} part less than or equal to $k$ actually appears with nonzero multiplicity
  in the Jordan form of $\caO$.
\end{prop}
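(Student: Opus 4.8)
The plan is to follow the template established in the proof of Proposition \ref{p:sppq-ic}, replacing the Spaltenstein duality procedure for $\Sp(2n,\bbC)/\SO(2n+1,\bbC)$ by the analogous procedure for $\SO^*(2n)/\SO(2n,\bbC)$. Concretely, let $\nu$ be the partition of $2n$ parametrizing $\caO$, and recall that nilpotent orbits for $\frs\fro(2n,\bbC)$ are parametrized by D-partitions of $2n$ (partitions of $2n$ in which even parts occur with even multiplicity, with the usual very-even caveat which will not affect the parity count). The recipe for $d(\caO)$ in type D is: add $1$ to the largest part of $\nu$ to form $\nu^+$, take the D-collapse $\nu^D$ (the largest D-partition $\le \nu^+$), and then $d(\caO)$ is parametrized by the transpose of $\nu^D$, which is automatically a D-partition. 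First I would write this out, together with the description of $\chi'(\caO)$ from Section \ref{s:sostar-ic} and the description of $\chi(\caO^\vee) = \frac12 h^\vee$ via the middle element of a Jacobson--Morozov triple in type B/D (as in \cite[Chapter 5]{cm}).

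Next I would establish the combinatorial equivalence: $\caO^\vee = d(\caO)$ is even $\iff$ every odd part $\le k$ appears with nonzero multiplicity in $\nu$. Here one uses that a D-partition is even precisely when all of its parts are even (the type-D analogue of the fact, used implicitly in Proposition \ref{p:sppq-ic}, that a B-partition is even iff all parts are odd). Since $\caO^\vee$ is the transpose of $\nu^D$, requiring all parts of $\nu^D$ to be even is the same as requiring all columns of $\nu^D$ to have even length, i.e.\ requiring $(\nu^D)^t$ to have even parts only. Tracing through the D-collapse — which only redistributes boxes downward among adjacent rows — one checks, by the same kind of row-by-row induction that appears in the (commented-out) argument in Proposition \ref{p:sppq-ic}, that this forces the largest part of $\nu$ to be odd and that each smaller odd value must actually occur as a part. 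I would run this induction explicitly but briefly: adding $1$ to the largest part, then collapsing, one sees that a missing odd part $\le k$ obstructs the all-even-columns condition, and conversely that if all odd parts $\le k$ are present the collapse produces a partition whose transpose has only even parts.

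Finally, assuming $\caO^\vee$ is even, I would verify $\chi(d(\caO)) = \chi'(\caO)$. For an even orbit, $\frac12 h^\vee$ has all integer (in fact, the standard ``even'' pattern of) coordinates, and reading off $\frac12 h^\vee$ from the Jordan type of $\caO^\vee$ in type D gives exactly the concatenation of strings $(0,\dots,d_{i-1})$ and $(1,\dots,d_i)$ prescribed in Section \ref{s:sostar-ic} — the parity shift relative to the $\Sp$ case (strings of the first type attached to \emph{odd} $i$ here versus \emph{even} $i$ in Proposition \ref{p:sppq-ic}) is precisely accounted for by the shift from $\SO(2n{+}1,\bbC)$ to $\SO(2n,\bbC)$ and the corresponding change in $\rho$. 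Conversely, if $\caO^\vee$ is not even then $\chi(\caO^\vee)$ has a half-integer coordinate and so cannot agree with the manifestly integral $\chi'(\caO)$; this gives the ``only if'' direction for free.

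The routine bookkeeping with collapses and Jacobson--Morozov middle elements is standard (and parallels \cite{cm} and the $\Sp$ case), so the main obstacle is the combinatorial induction identifying the even-ness of the transposed D-collapse with the ``no missing odd part $\le k$'' condition: one must handle carefully the boundary cases of the collapse (a largest part that is already odd with the right multiplicity, versus one that must be decremented) and, in type D, keep an eye on very-even partitions, which split into two orbits but do not affect the parity statement being proved. Since an entirely analogous argument is carried out in the $\Sp(p,q)$ case, I would present this proof in the same compressed style — stating the collapse recipe, asserting the equivalence, and indicating the inductive mechanism — rather than grinding through every case.
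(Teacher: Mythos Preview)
Your overall strategy is exactly what the paper intends: the proof of Proposition~\ref{p:sostar-ic} is omitted there precisely because it is meant to be the type-D rerun of the proof of Proposition~\ref{p:sppq-ic}, and that is what you propose.  However, two of the type-D specifics you write down are wrong, and they are not cosmetic --- they would make the combinatorial verification aim at the wrong target.

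First, the Spaltenstein recipe.  In the $\Sp(2n,\bbC)\to\SO(2n+1,\bbC)$ case one must pass from partitions of $2n$ to partitions of $2n+1$, which is why a box is added.  In type~D the dual group is again $\SO(2n,\bbC)$, so there is no size change and no ``add $1$'' step: the correct formula is simply $d(\caO)=(\nu^{t})_{D}$, the D-collapse of the transpose.  If you mechanically copy the $\Sp$ recipe (add $1$, collapse, transpose) you get partitions of the wrong size, and if you drop the ``add $1$'' but keep the order ``collapse then transpose'' you get $\nu^{t}$ with no collapse at all (since $\nu$ is already a D-partition), which is also wrong.

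Second, the parity criterion.  Your claim that ``a D-partition is even precisely when all of its parts are even'' is not the right analogue of the type-B statement.  In type~D an orbit is even iff all parts have the \emph{same} parity; but what actually matters for the proposition is the stronger condition that $\chi(d(\caO))$ have integer (not half-integer) coordinates so that it can possibly agree with the manifestly integer-valued $\chi'(\caO)$.  That forces all parts of $d(\caO)$ to be \emph{odd}, not even.  It is this ``all parts odd'' condition on $(\nu^{t})_{D}$ that one checks, via the row-by-row collapse induction you outline, to be equivalent to ``every odd part $\le k$ occurs in $\nu$.''  (For instance $\nu=(2,2,2,2)$ for $\SO(8)$ has $d(\nu)=(4,4)$, which has all even parts --- an even orbit in the Dynkin-diagram sense --- yet $\chi(d(\caO))$ lies in $(\tfrac12+\bbZ)^{4}$ and does not equal $\chi'(\caO)$; correspondingly the odd part $1\le k=2$ is missing from $\nu$.)  Once you correct these two points, your sketch goes through exactly as in the $\Sp$ case.
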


\subsection{Counting Unipotent Representations}
The situation is entirely parallel to Section \ref{s:sppq-counting}; the references used there
also apply to $\SO^*(2n)$.
\label{s:sostar-counting}
\begin{theorem}
\label{t:sostar-counting}
Fix a complex nilpotent orbit $\caO$ meeting $\frp$, and write
\[
\caO  \cap \frp = \{ \caO_K^1, \dots, \caO_K^r\}.
\]
Recall the infinitesimal character $\chi'(\caO)$ and primitive ideal $I'(\caO)$
of Section \ref{s:sostar-ic}.  Write $\Unip'(\caO))$ for the
set of irreducible Harish-Chandra modules for $\SO^*(2n)$ annihilated by $I'(\caO)$.  
Then there is a bijection
\begin{align*}
\{ \caO_K^1, \dots, \caO_K^r\} &\longleftrightarrow \Unip'(\caO) \\
\caO_K^i &\longleftrightarrow \pi'(\caO_K^i)
\end{align*}
such that $\caO_K^i$ is the unique dense $K$ orbit in the associated variety of $\pi'(\caO_K^i)$.
\end{theorem}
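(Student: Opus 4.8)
The plan is to run, essentially verbatim, the argument used for $\Sp(p,q)$ in the proof of Theorem \ref{t:sppq-counting}, substituting at each step the $\SO^*(2n)$ counterpart of the ingredient used there. Fix a $K$ orbit $\caO_K$ in $\caO \cap \frp$ and set $\caO = G \cdot \caO_K$. The first step is to invoke the $\SO^*(2n)$ version of \cite[Theorem 5.24]{t:ltc}: the associated variety of any irreducible Harish-Chandra module for $\SO^*(2n)$ with integral infinitesimal character is the closure of a single nilpotent $K$ orbit on $\frp$, and the set $\caC(\caO_K)$ of irreducible Harish-Chandra modules with trivial infinitesimal character and associated variety $\ol{\caO_K}$ forms a single cell of Harish-Chandra modules. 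The results of \cite{t:ltc} are formulated so as to apply here, so this requires nothing beyond keeping track of the parity conventions emphasized in Sections \ref{s:sostar-nilp} and \ref{s:sostar-ic}.

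Next I would translate the cell $\caC(\caO_K)$ from trivial infinitesimal character to $\chi'(\caO)$ along a path crossing no walls. As in the symplectic case, most elements of the cell become zero under this translation; the modules that survive are irreducible, have infinitesimal character $\chi'(\caO)$, and have associated variety $\ol{\caO_K}$, hence — by Proposition \ref{p:sostar-pi} — are annihilated by $I'(\caO)$. Conversely, any irreducible module annihilated by $I'(\caO)$ has associated variety contained in the closure of $\caO = d(d(\caO))$ and (by irreducibility of associated varieties for these groups, again from \cite[Theorem 5.24]{t:ltc}) equal to $\ol{\caO_K}$ for some $\caO_K \subset \caO \cap \frp$; by Proposition \ref{p:sostar-pi} this forces it into the translated cell $\caC(\caO_K)$. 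Finally, the cell module structure of $\caC(\caO_K)$ computed in \cite[Theorem 6]{monty} shows that for each $\caO_K$ exactly one module survives the translation; denote it $\pi'(\caO_K)$. This yields the asserted bijection, with $\caO_K$ the unique dense $K$ orbit in $\AV(\pi'(\caO_K))$, and simultaneously the analogue of the characterization in Remark \ref{r:sppq-counting}.

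The only point requiring genuine attention — and therefore the main obstacle — is bookkeeping: the combinatorial inputs of \cite{t:ltc} and \cite{monty}, and the domino-tableau primitive-ideal count underlying Proposition \ref{p:sostar-pi}, must be checked against the $\SO^*(2n)$ parity conventions (even parts of rows occurring with even multiplicity, the roles of odd versus even rows in the signed tableaux, and the corresponding shifts in the strings defining $\chi'(\caO)$) rather than the $\Sp(p,q)$ ones. Once one verifies that the cell combinatorics and the primitive-ideal classification behave exactly as in Section \ref{s:sppq-counting} under this interchange of parities, the bijection follows formally by the argument above, which is why the detailed verification is omitted.
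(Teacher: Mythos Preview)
Your proposal is correct and follows exactly the approach the paper takes: the paper simply states that the situation is entirely parallel to Section \ref{s:sppq-counting} and that the references used there (namely \cite[Theorem 5.24]{t:ltc} and \cite[Theorem 6]{monty}) also apply to $\SO^*(2n)$, and then omits the proof. Your write-up reproduces the argument of Theorem \ref{t:sppq-counting} with the appropriate $\SO^*(2n)$ substitutions and correctly identifies the parity bookkeeping as the only point needing attention.
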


\subsection{Unipotent $A_\frq(\lam)$ modules: all {\bf odd} parts of $\caO_K$ have multiplicity at most 2.}
\label{s:sostar-aql}
Fix an orbit $\caO_K$ parametrized by a tableau $S$ all of whose {\bf odd} parts have multiplicity at most 2.

First modify $S$ to obtain a new signed tableau $S_1$ as follows.
Let the {\bf even} rows of $S_1$ equal the {\bf even} rows of $S$, but change each pair of {\bf odd} rows $(m_+,m_-)$ in $S$ to a pair of {\bf even}  rows $((m+1)_+, (m-1)_-)$ in $S_1$.  So $S_1$ is now a tableau in which only {\bf even} rows 
appear.  (Again choices are made in the definition of $S_1$, but these ultimately do not matter.)

Write $2r+2$ for the largest part of $S_1$.   Set
\begin{align*}
p_r &=  \text{the number of rows of length $2r+2$ beginning with plus};\\
q_r &=  \text{the number of rows of length $2r+2$ beginning with minus};\\
p_{r-1} &=  q_r + \text{the number of rows of length $2r$ beginning with minus};\\
q_{r-1} &=  p_r + \text{the number of rows of length $2r$ beginning with plus};\\
&\vdots \\
p_0 &= q_1 +  \text{the number of rows of length $2$ beginning with $(-1)^r$};\\
q_0 &= p_1 +  \text{the number of rows of length $2$ beginning with $-(-1)^r$}.
\end{align*}

The sequence $(p_0,q_0),(p_1,q_1), \dots, (p_r,q_r)$
parametrizes a $\theta$-stable parabolic subalgebra
$\frq(\caO_K) = \frl(O_K) \oplus \fru(\caO_K)$ with corresponding Levi factor
\[
\SO^*(2p_0) \times \U(p_1,q_1) \times \cdots \times \U(p_r,q_r).
\]
The properties of Proposition \ref{p:sppq-birat} carry over as follows.
 This time, one uses \cite[Proposition 6.1]{t:rich} to compute the signed
 tableau for the dense $K$ orbit in $K\cdot(\fru(\caO_K) \cap \frp)$; it
 is obtained by successively adding consecutive signed columns 
\[
c(p_r,q_r), c(p_{r-1},q_{r-1}), \cdots, c(p_1,q_1), c(p_0,q_0), c(q_1,p_1), \cdots, c(q_{r-1},p_{r-1}), c(q_{r},p_{r})
\]
and then collapsing (this time for Type D) to obtain a signed partition satisfying the conditions of Section \ref{s:sppq-nilp}.   Everything has once again been engineered so that $\caO_K$ is dense in $K\cdot (\fru(\caO_K) \cap \frp)$.  Comparing with
the calculation of $G\cdot \fru(\caO_K)$ yields the conclusion of  Proposition \ref{p:sppq-birat}  in the setting of $\SO^*(2n)$.

Next, the analog of Lemma \ref{l:sppq-lam} provides
a character $\lam(\caO_K)$.  The proof of Proposition 
\ref{p:sppq-unip-aql} then applies to give:
\begin{prop}
\label{p:sostar-unip-aql}
Fix a complex nilpotent orbit $\caO$ meeting $\frp$ whose Jordan form has all {\bf odd} parts occurring at most twice.  
In the notation of Theorem \ref{t:sostar-counting}, 
\[
\pi'(\caO_K) = A_{\frq(\caO_K)}(\lam(\caO_K)).
\]
In particular, $\pi'(\caO_K)$ is unitary and $\caO_K$ occurs with multiplicity one in its associated variety.  
\end{prop}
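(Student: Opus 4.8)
The plan is to mimic the proof of Proposition \ref{p:sppq-unip-aql} verbatim, tracking only the parity changes that have been flagged in boldface throughout Section \ref{s:sostar}. First I would invoke the three inputs already assembled above the statement: (a) the analogue of Proposition \ref{p:sppq-birat} for $\SO^*(2n)$, which says that $\caO_K$ is the dense $K$ orbit in $K\cdot(\fru(\caO_K)\cap\frp)$ and that $\dim\bigl(G\cdot(\fru(\caO_K)\cap\frp)\bigr)=\dim\bigl(G\cdot\fru(\caO_K)\bigr)$ --- this has been established in the paragraph preceding the proposition by combining \cite[Proposition 6.1]{t:rich} with the Type-D collapse and the induced-orbit calculation from \cite[Theorem 7.3.3]{cm}; (b) the $\SO^*(2n)$ analogue of Lemma \ref{l:sppq-lam}, which produces a one-dimensional $(\frl(\caO_K),L(\caO_K)\cap K)$ module $\bbC_{\lam(\caO_K)}$ in the weakly fair range for $\frq(\caO_K)$ with $\lam(\caO_K)+\rho$ representing $\chi'(\caO)$, immediate from the explicit form of $\chi'(\caO)$ in Section \ref{s:sostar-ic}; and (c) Theorem \ref{t:sostar-counting}, together with the $\SO^*(2n)$ analogue of Remark \ref{r:sppq-counting}, characterizing $\pi'(\caO_K)$ as the unique Harish-Chandra module with infinitesimal character $\chi'(\caO_K)$ and associated variety $\ol{\caO_K}$.

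The argument then runs: by Theorem \ref{t:aql}(iii) the module $A_{\frq(\caO_K)}(\lam(\caO_K))$ is unitary with associated variety $K\cdot(\fru(\caO_K)\cap\frp)$, which by (a) has $\caO_K$ as its dense orbit; by Theorem \ref{t:aql}(i) its infinitesimal character is represented by $\lam(\caO_K)+\rho$, which by (b) equals $\chi'(\caO_K)$. Since the dimension hypothesis of Theorem \ref{t:aql}(iv) holds (again by (a)), the module is in particular nonzero and irreducible, so its associated variety is exactly $\ol{\caO_K}$. The characterization in the $\SO^*(2n)$ version of Remark \ref{r:sppq-counting} then forces $\pi'(\caO_K)=A_{\frq(\caO_K)}(\lam(\caO_K))$, and unitarity follows from Theorem \ref{t:aql}(iii). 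For the multiplicity-one statement I would again appeal to Theorem \ref{t:aql}(iv): the multiplicity of the dense orbit in the associated cycle divides the order of the component group $A_K(\xi)$ of the centralizer in $K$ of a point $\xi\in\caO_K$, and for $\SO^*(2n)$ this component group is trivial, which one checks by the explicit centralizer computations along the lines of \cite[Chapter 6]{cm}.

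The only genuine point requiring care --- and the main potential obstacle --- is verifying that the Type-D combinatorics actually goes through as claimed: one must confirm that the column-addition recipe $c(p_r,q_r),\dots,c(p_0,q_0),c(q_1,p_1),\dots,c(q_r,p_r)$ followed by the \emph{Type-D} collapse (which differs from the Type-B/C collapse used for $\Sp(p,q)$, since very even partitions split into two $K$-orbits) reproduces the tableau $S_1$, and that $S_1$ in turn collapses back to $S$. The swap from $(p_i,q_i)$ to $(q_i,p_i)$ in the second half of the string, and the boldface swap of ``even''/``odd'' roles relative to Section \ref{s:sppq-aql}, are exactly engineered for this; but because the paper declares these proofs omitted, in practice I would simply assert the analogue of Proposition \ref{p:sppq-birat} (as the surrounding text already does) and cite \cite[Proposition 6.1]{t:rich} and \cite[Theorem 7.3.3]{cm}, reducing the proof to the three-line deduction above. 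A secondary subtlety is the possibility that $\caO$ is ``very even,'' so that $\caO\cap\frp$ meets two $G$-orbits' worth of structure; here one notes that $\frq(\caO_K)$ was built from $\caO_K$ itself (via $S$ and the choices in $S_1$), so no ambiguity arises, paralleling Remark \ref{r:sppq-unip-aql}.
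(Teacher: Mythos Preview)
Your proposal is correct and follows essentially the same route as the paper, which simply declares that ``the proof of Proposition \ref{p:sppq-unip-aql} then applies'' and omits all details. If anything you are slightly more careful than the paper: you explicitly invoke Theorem \ref{t:aql}(iv) to secure irreducibility before applying the uniqueness characterization of Remark \ref{r:sppq-counting}, and you flag the Type-D collapse and the very-even ambiguity, neither of which the paper mentions.
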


Once again the proposition does not cover all $\pi'(\caO_K)$ which are weakly
fair $\Aql$ modules.  (The trivial representation is such an example.)

\subsection{Proof of Theorem \ref{t:intro2}.}
\label{s:sostar-proof-unitary'}
One proves that $\pi'(\caO_K)$, for general $\caO_K$, is unitary by the exactly the same argument as in Section \ref{s:intro2-proof}.  By using carefully chosen parabolic inductions (to successively reduce the multiplicity of odd parts in the partition parametrizing $\caO_K$), one
finds that $\pi'(\caO_K)$ is unitary if and only if $\pi'(\caO_K^\dagger)$ is unitary, where  $\caO^\dagger_K$ is now an orbit for a larger group 
of the form treated in the previous section.  In particular, the statement of the
key computation of Lemma \ref{l:sppq-real-ind-orbits} applies without change; and the deformation argument of Lemma \ref{l:sppq-main} carries over in a similar fashion.

\subsection{Proof of Theorem \ref{t:intro1}.}
\label{s:sostar-proof-unitary}
Noneven orbits are treated as in Section \ref{s:intro1-proof}: the corresponding special unipotent representations are irreducibly parabolically induced from an integral special unipotent
representation whose unitarity is handled by the previous section.

\subsection{Multiplicities in associated cycles}
\label{s:sostar-mult}Finally, the analog of Theorem \ref{t:sppq-mult} is proved in the same way.


\begin{thebibliography}{ABHW}

\bibitem[ABV]{abv} Adams, J., D. Barbasch, and D. A. Vogan, Jr., {\em
The Langlands Classification and Irreducible Characters for Real
Reductive Groups}, Progress in Math, Birkh\"auser (Boston), {\bf 104}(1992).

\bibitem[Ba1]{barbasch-icm}
D.~Barbasch,
Unipotent representations for real reductive groups,
in
{\em Proceedings of the International Congress of Mathematicians, Vol. I, II (Kyoto, 1990)}, 769--777.

\bibitem[Ba2]{barbasch-hc}
D.~Barbasch,
Orbital integrals of nilpotent orbits, 
Proc. Sympos. Pure Math., {\bf 68} (2000), 97?110. 


\bibitem[BaBo]{babo}
D.~Barbasch, M.~Bozicevi\'{c},
The associated variety of an induced representation,
{\em Proc.~Amer.~Math.~Soc.}, {\bf 127} (1999), no.~1, 279--288. 


\bibitem[BV1]{bv:class} D.~Barbasch and D.~A.~Vogan, Jr., 
Primitive ideals and orbital integrals in complex classical groups,
{\em Math.~Ann.}, {\bf 259} (1982), no.~2, 153--199.

\bibitem[BV2]{bv2} D.~Barbasch and D.~A.~Vogan, Jr., 
Unipotent representations of complex semisimple groups,
{\em Ann.~of Math.~(2)}~{\bf 121} (1985),  no.~1, 41--110.



\bibitem[BoJ]{borho-jantzen}
W.~Borho, J.~C.~Jantzen,
\"{U}ber primitive Ideale in der Einh\"{u}llenden einer halbeinfachen Lie-Algebra,
{\em Invent.~Math.}, {\bf 39} (1977), no.~1, 1--53.


\bibitem[CM]{cm}
D.~H.~Collingwood and W.~M.~McGovern, {\em Nilpotent orbits in
semisimple Lie algebras}, Chapman and Hall (London), 1994.



\bibitem[KnV]{kv} A. Knapp and D.~A.~Vogan, Jr., \textit{Cohomological Induction and
Unitary Representations}, Princeton University Press, Princeton, NJ, 1995.

\bibitem[MSZ]{msz}
J.~Ma, B.~Sun, C.-B.~Zhu,
Unipotent representations of real classical groups,
{arXiv:1712.05552}.


\bibitem[Mc1]{monty}
W.~M.~McGovern, Cells of Harish-Chandra modules for
real classical groups, {\em  Amer.~J.~Math}~{\bf  120} (1998),
  no. 1, 211--228. 


\bibitem[PT]{pt} A.~Paul, P.~E.~Trapa, One-dimensional representations
of $\U(p,q)$ and the Howe correspondence, {\em J.~Funct.~Anal.} {\bf
195} (2002), no.~1, 129--166.


\bibitem[T1]{t:rich} 
P.~E.~Trapa,
Richardson orbits for real groups,
{\em J.~Algebra}, {\bf 286} (2005), 361--385.

\bibitem[T2]{t:ltc}
P.~E.~Trapa,
Leading term cycles and partial orders on components of the Springer fiber,
{\em Compos. Math.}, {\bf 143} (2007), no.~2, 515--540. 

\bibitem[V1]{v:unit} 
 D.~A.~Vogan, Jr.,
  {Unitarizability of certain series of 
representations}, {\em Ann.~Math.}~{\bf 120}(1984), 141--187. 

\bibitem[V2]{v:ds}
 D.~A.~Vogan, Jr.,
Irreducibility of discrete series representations for semisimple symmetric spaces,
in {\em Representations of Lie groups, Kyoto, Hiroshima, 1986}, 191--221, 
Adv.~Stud.~Pure Math., {\bf 14} (1988), Academic Press (Boston, MA).
  
\bibitem[V3]{v:av}
 D.~A.~Vogan, Jr.,
  {Associated varieties and unipotent representations},
{in} {\em Harmonic analysis on reductive groups (Brunswick, ME,
1989)}, Progress in Math.~{\bf 101}(1991), Birkh\"auser (Boston),
315--388.



\end{thebibliography}
\end{document}